\newtheorem{theorem}{Theorem}[section]
\newtheorem{corollary}[theorem]{Corollary}
\newtheorem{proposition}[theorem]{Proposition}
\newtheorem{definition}[theorem]{Definition}
\newtheorem{example}[theorem]{Example}
\newtheorem{remark}[theorem]{Remark}
\newcommand{\Hom}{{\rm Hom}}
\newcommand{\Rat}{{\rm Rat}}
\newcommand{\fp}{{\rm fp}}
\newcommand{\fd}{{\rm fd}}
\newcommand{\Ker}{{\rm Ker}\,}
\newcommand{\Coker}{{\rm Coker}\,}
\newcommand{\im}{{\rm Im}\,}
\newcommand{\Mor}{{\rm Mor}\,}
\newcommand{\Ff}{\mathcal{F}}
\newcommand{\Mm}{\mathcal{M}}
\def\NN{{\mathbb N}}
\begin{document}
\sloppy

\title[Symmetry for comodule categories]{EQUIVALENCES OF CATEGORIES, GRUSON-JENSEN DUALITY AND
APPLICATIONS}

\subjclass[2000]{18C35, 18A32, 16T15} \keywords{Symmetric categories, duality, comodule, coalgebra.}

\begin{abstract} For coalgebras $C$ over a field, we study when the categories ${}^C\Mm$ of left $C$-comodules and
$\Mm^C$ of right $C$-comodules are symmetric categories, in the sense that there is a duality between the  categories of
finitely presented unitary left $R$-modules and finitely presented unitary left $L$-modules, where $R$ and $L$ are the
functor rings associated to the finitely accessible categories  ${}^C\Mm$ and $\Mm^C$.
\end{abstract}

\author{Septimiu Crivei}

\address{Faculty of Mathematics and Computer Science \\ ``Babe\c s-Bolyai" University \\ Str. Mihail Kog\u alniceanu 1
\\ 400084 Cluj-Napoca, Romania} \email{crivei@math.ubbcluj.ro}

\author{Miodrag Cristian Iovanov}

\address{ 
University of Bucharest, Fac. Matematica \& Informatica,
Str. Academiei 14,
Bucharest 010014,
Romania, \&\\
University of Southern California, 3620 South Vermont Ave. KAP 108\\
Los Angeles, CA 90089, USA}
\email{yovanov@gmail.com, iovanov@usc.edu}

\maketitle

\section{Introduction}

Let $R$ be a ring with identity and let $\mathcal{M}_R$ denote the category of unitary right $R$-modules. 
Consider the functor rings associated to the categories $\mathcal{M}_R$ and $\mathcal{M}_{R^{\rm op}}$, say $S$ and
$A^{\rm op}$ respectively. Then there is a duality between the categories of finitely presented unitary left $S$-modules
and finitely presented unitary right $A$-modules. This is a reformulation of the duality proved by Gruson and Jensen
\cite[Theorem~5.6]{GJ}. The result was extended by Dung and Garc\'\i a \cite[Theorem~2.9]{DG} to the case of unitary
modules over a ring with enough idempotents, and furthermore, by Crivei and Garc\'\i a \cite[Corollary~5.13]{CG} to the
case of unitary and torsionfree modules over an idempotent ring, provided the corresponding module categories are
locally finitely generated. The latter was the first Gruson-Jensen duality established for categories not having enough
projectives.

Gruson-Jensen duality can also be reformulated using the notion of symmetric categories. The idea of considering such a 
concept appeared first in the work of Herzog \cite{Herzog}, according to the account given by Prest in
\cite{Prest}, and was later on used by Dung and Garc\'\i a for finitely accessible categories \cite{DG}, and by Crivei
and Garc\'\i a for exactly definable categories \cite{CG}. In this language, the above  Gruson-Jensen dualities say that
the categories $\mathcal{M}_R$ and $\mathcal{M}_{R^{\rm op}}$ are symmetric to each other whenever they are: categories
of unitary modules over a ring with identity, categories of unitary modules over a ring with enough idempotents, or more
generally, categories of unitary and torsionfree modules over an idempotent ring, provided they are locally finitely
generated. Every finitely accessible Grothendieck category has a symmetric category, but this fails in general to be
finitely accessible Grothendieck; see the example from \cite[p.~3953]{CG}. Therefore, an interesting problem is to find
further examples of finitely accessible Grothendieck categories having their symmetric categories again finitely
accessible Grothendieck.

In the present paper we consider categories of comodules over a coalgebra $C$ over a field. We first recall general
results and give an interpretation of Gruson-Jensen duality in terms of Freyd categories and homotopy categories of
certain categories of chain complexes. The categories of left
$C$-comodules and right $C$-comodules are locally finite Grothendieck \cite{DNR}, and so finitely accessible
Grothendieck. The existence of a left-right setting similar to the case of modules suggests that this would be a good
framework for a Gruson-Jensen duality to hold. Let us first note that the categories of left $C$-comodules and right
$C$-comodules are equivalent to categories of modules over rings with identity if $C$ is a finite dimensional coalgebra,
and more generally, they are equivalent to categories of modules over rings with enough idempotents if $C$ is a left and
right semiperfect coalgebra \cite{DNR}. So the categories of left $C$-comodules and of right $C$-comodules are
symmetric, or equivalently, the Gruson-Jensen duality takes place in these cases, because it can be reduced to
the aforementioned module-theoretic contexts. On the other hand, we shall construct several examples of coalgebras with
very good finitary properties for which the Gruson-Jensen duality does not hold. 

Although a coalgebra has very good built-in finiteness properties, which might at first glance lead one to believe that 
such a duality would be in place, it turns out that there are situations where some strong conditions are fulfilled but
the Gruson-Jensen duality does not hold. We show that for a coalgebra which is only left or only right semiperfect, the
Gruson-Jensen duality can fail. We also show that another set of strong conditions on a coalgebra $C$ is not enough to
have such a duality: we give an example where $C^*$ is left and right almost noetherian (meaning that cofinite left
ideals and cofinite right ideals are finitely generated) and moreover $C^*$ is even noetherian on one side, but the
Gruson-Jensen duality between the functor rings of $\Mm^C$ and ${}^C\Mm$ does not hold.

\section{Finitely accessible categories and functor rings}

Throughout the paper all categories will be additive and all modules will be unitary. Let us recall some terminology on
finitely accessible categories. An object $P$ of a category $\mathcal{C}$ with direct limits is called \emph{finitely
presented} if the functor $\Hom_{\mathcal{C}}(P,-)$ commutes with direct limits. A category $\mathcal{C}$ is called
\emph{finitely accessible} (or \emph{locally finitely presented} in the terminology of \cite{CB}) if $\mathcal{C}$ has
direct limits, the class $\fp(\mathcal{C})$ of finitely presented objects of $\mathcal{C}$ is skeletally small, and
every object of $\mathcal{C}$ is a direct limit of finitely presented objects \cite{Prest}. A Grothendieck category is
finitely accessible if and only if it has a family of finitely presented generators. A finitely accessible category is
called \emph{locally coherent} if every finitely presented object is coherent, that is, finitely generated subobjects of
finitely presented objects are finitely presented. Such a category is necessarily Grothendieck and has a family
of finitely presented generators \cite{Prest}.

Now let $\mathcal{C}$ be a finitely accessible category and let $(U_i)_{i\in I}$ be a family of 
representatives of the isomorphism classes of finitely presented objects of $\mathcal{C}$. We associate a ring $R$ to
the family $(U_i)_{i\in I}$ in the following way (e.g., see \cite{DG}, \cite{Gabriel}): \[R=\bigoplus_{i\in
I}\bigoplus_{j\in J}\Hom_{\mathcal{C}}(U_i,U_j)\] as abelian group, and the multiplication is given by the rule: if
$f\in \Hom_{\mathcal{C}}(U_i,U_j)$ and $g\in \Hom_{\mathcal{C}}(U_k,U_l)$, then $fg=f\circ g$ if $i=l$ and zero
otherwise. Then $R$ is a ring with enough idempotents \cite{Fuller}, say $R=\bigoplus_{i\in I}e_iR=\bigoplus_{i\in
I}Re_i$. The idempotents $e_i$ are the elements of $R$ which are the identity on $U_i$ and zero elsewhere, and they form
a complete family of pairwise orthogonal idempotents. The ring $R$ constructed above is called the \emph{functor ring}
of $\mathcal{C}$. The family $(Re_i)_{i\in I}$ is a family of finitely generated projective generators of the category
${}_R\mathcal{M}$ of (unitary) left $R$-modules. A (unitary) left $R$-module $X$ is finitely presented if and only
if there is an exact sequence $\bigoplus_{i\in F_1}Re_i\to \bigoplus_{j\in F_2}Re_i\to X\to 0$ for some finite sets
$F_1$ and $F_2$ of indices in $I$. Now denote $U=\bigoplus_{i\in I}U_i$. Since $\Hom_{\mathcal{C}}(U_i,U)=Re_i$, it follows
that a left $R$-module $X$ is finitely presented if and only if there is an exact sequence $\Hom_{\mathcal{C}}(N,U)\to
\Hom_{\mathcal{C}}(M,U)\to X\to 0$. It is straightforward to show that the Yoneda functor
$\Hom_{\mathcal{C}}(-,U):\fp(\mathcal{C})\to {}_R\mathcal{M}$ is a contravariant full, faithful and left exact
functor, which reflects monomorphisms to epimorphisms and epimorphisms to split monomorphisms. It induces a duality
between finitely presented objects in $\mathcal{C}$ and finitely generated projective objects in ${}_R\Mm$.

We recall a couple of properties of finitely accessible categories which will be needed later on.

\begin{proposition}{\cite[Theorem~6.1]{Prest}}\label{fginfp} A finitely accessible category has products if and only if
the category of left modules over its functor ring is locally coherent. 
\end{proposition}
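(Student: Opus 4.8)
The plan is to route the argument through the functor ring and a coherence-theoretic (Chase-type) characterisation of flatness. Write $\mathcal{C}$ for the finitely accessible category, $\mathcal{A}=\fp(\mathcal{C})$ for the (skeletally small, additive) category of its finitely presented objects, and $R$ for the functor ring; thus ${}_R\Mm\simeq(\mathcal{A},\Ab)$ is the category of additive functors $\mathcal{A}\to\Ab$, with the representables $\Hom_{\mathcal{C}}(U_i,-)\cong Re_i$ a family of finitely generated projective generators. Dually $\Mm_R\simeq(\mathcal{A}^{\op},\Ab)$, and the restricted Yoneda functor $T\colon\mathcal{C}\to\Mm_R$, $X\mapsto\Hom_{\mathcal{C}}(-,X)|_{\mathcal{A}}$, is fully faithful with essential image the class $\Fl(R)$ of flat right $R$-modules; this is the standard presentation of a finitely accessible category as the flat modules over its functor ring (see, e.g., \cite{CB,DG}). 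With this in hand I would establish the chain of equivalences
\[
\mathcal{C}\text{ has products}\ \Longleftrightarrow\ \Fl(R)\text{ is closed under products in }\Mm_R\ \Longleftrightarrow\ {}_R\Mm\text{ is locally coherent}.
\]

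For the first equivalence, ``$\Leftarrow$'' is immediate because $\Mm_R$ has pointwise products and $T$ identifies $\mathcal{C}$ with $\Fl(R)$. For ``$\Rightarrow$'', suppose $(X_{\lambda})_{\lambda\in\Lambda}$ has a product $P'$ in $\mathcal{C}$, let $P$ be the pointwise product of the $TX_{\lambda}$ in $\Mm_R$, and let $\phi\colon TP'\to P$ be the canonical comparison morphism. For every $Y\in\mathcal{C}$, full faithfulness of $T$ yields a natural chain $\Hom_{\Mm_R}(TY,TP')\cong\Hom_{\mathcal{C}}(Y,P')\cong\prod_{\lambda}\Hom_{\mathcal{C}}(Y,X_{\lambda})\cong\prod_{\lambda}\Hom_{\Mm_R}(TY,TX_{\lambda})\cong\Hom_{\Mm_R}(TY,P)$, and this composite is $\Hom_{\Mm_R}(TY,\phi)$; taking $Y=U_i$ and using that the $TU_i\cong e_iR$ form a generating set of $\Mm_R$, one gets that $\phi$ is an isomorphism, so $P\cong TP'$ is flat and $\Fl(R)$ is closed under products. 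Conversely, if $\Fl(R)$ is closed under products, the module $P$ above is flat for any family, so $P\cong TP'$ with $P'\in\mathcal{C}$, and full faithfulness of $T$ shows at once that $P'$, with the evident projections, is a product of the $X_{\lambda}$ in $\mathcal{C}$.

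The second equivalence is Chase's theorem, in the form valid for rings with enough idempotents. Since ${}_R\Mm$ is Grothendieck with finitely generated projective generators, it is locally coherent exactly when the kernel of every morphism of finitely generated projective left $R$-modules is finitely generated — equivalently, via the contravariant duality $\Hom_{\mathcal{C}}(-,U)\colon\fp(\mathcal{C})\to{}_R\Mm$ onto the finitely generated projectives recalled in the text, when every morphism of $\fp(\mathcal{C})$ admits a weak cokernel in $\fp(\mathcal{C})$. Granting local coherence, every finitely presented left $R$-module admits a projective resolution by finitely generated projectives; tensoring such a resolution with a direct product of right $R$-modules commutes with the product term by term, and homology commutes with products because products are exact in $\Ab$, so $\mathrm{Tor}^R_1\!\left(\prod_{\lambda}M_{\lambda},N\right)\cong\prod_{\lambda}\mathrm{Tor}^R_1(M_{\lambda},N)$ for every finitely presented left module $N$; as flatness of a right $R$-module is tested by vanishing of $\mathrm{Tor}^R_1$ against finitely presented left modules, a product of flat right $R$-modules is flat. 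The reverse implication is the harder, classical half of Chase's theorem: from a map of finitely generated projectives with non-finitely-generated kernel one manufactures a product of flat modules (a power of $R$) that fails to be flat. I expect this reverse implication — together with the routine but non-vacuous task of checking that Chase's argument survives the passage to rings with enough idempotents — to be the main obstacle; granting it, the two equivalences combine to give the statement.
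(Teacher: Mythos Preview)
The paper does not prove this proposition at all: it is stated as a quotation of \cite[Theorem~6.1]{Prest} and used as a black box, so there is no ``paper's own proof'' to compare against. Your outline is, in fact, essentially the standard argument behind the cited result: identify $\mathcal{C}$ with $\Fl(R)\subseteq\Mm_R$ via the restricted Yoneda embedding (as in \cite{CB}), reduce the existence of products in $\mathcal{C}$ to closure of $\Fl(R)$ under products in $\Mm_R$, and then invoke the Chase-type equivalence between the latter and local coherence of ${}_R\Mm$.

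The argument is sound as sketched. Two comments on the places you flagged yourself. First, the passage from finitely presented resolutions to ``$\mathrm{Tor}$ commutes with products'' really only needs that finitely generated projectives are finite direct sums of the $Re_i$, so that $(\prod_\lambda M_\lambda)\otimes_R Re_i\cong\prod_\lambda M_\lambda e_i\cong\prod_\lambda(M_\lambda\otimes_R Re_i)$; your use of local coherence to continue the resolution by finitely generated projectives is exactly right. Second, the reverse (hard) direction of Chase's theorem does go through for rings with enough idempotents without change --- the usual construction of a non-finitely-generated kernel leading to a non-flat power of $R$ only uses elementwise manipulations inside finite sets of idempotents --- but since you will not find this written out in the paper, if you want a self-contained write-up you should either supply that verification or cite a source that covers the functor-category/idempotented version (Prest's memoir or Crawley-Boevey suffice).
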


\begin{proposition}{\cite[Proposition~2.2]{Roos}}\label{AbCat} A finitely accessible category $\mathcal{C}$ is locally
coherent if and only if $\fp(\mathcal{C})$ is abelian (with only finite coproducts). 
\end{proposition}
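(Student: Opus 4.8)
I would prove the two implications separately, writing $\mathcal{A}=\fp(\mathcal{C})$ throughout. Since $\Hom_{\mathcal{C}}(X\oplus Y,-)\cong\Hom_{\mathcal{C}}(X,-)\times\Hom_{\mathcal{C}}(Y,-)$ and finite products commute with filtered colimits, $\mathcal{A}$ is in any case a full additive subcategory of $\mathcal{C}$ closed under finite biproducts, so the only point at issue is whether $\mathcal{A}$ is abelian. The forward implication is essentially formal, using only the defining property of finitely presented objects together with the exactness of filtered colimits in $\Ab$; the converse carries the real content, and I would obtain it by realising $\mathcal{C}$ as a category of flat additive functors on $\mathcal{A}^{\op}$, where the hypothesis on $\mathcal{A}$ forces ``flat'' to collapse to ``left exact''.

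($\Rightarrow$) Suppose $\mathcal{C}$ is locally coherent; then $\mathcal{C}$ is Grothendieck, in particular abelian, and I claim $\mathcal{A}$ is closed in $\mathcal{C}$ under kernels and cokernels. For $f\colon M\to N$ in $\mathcal{A}$, applying $\Hom_{\mathcal{C}}(-,X)$ to $M\to N\to\Coker f\to 0$ and using that filtered colimits commute with finite limits in $\Ab$ gives $\Hom_{\mathcal{C}}(\Coker f,\varinjlim X_i)\cong\varinjlim\Hom_{\mathcal{C}}(\Coker f,X_i)$, so $\Coker f\in\mathcal{A}$ (this needs no coherence). For the kernel, $\im f$ is a finitely generated subobject of the finitely presented object $N$, hence finitely presented by coherence; and then $\Ker f$, which fits into the exact sequence $0\to\Ker f\to M\to\im f\to 0$ with both outer terms finitely presented, is finitely generated by the standard directed-union argument (write $\Ker f$ as the directed union of its finitely generated subobjects $K_\lambda$, which are finitely presented by coherence of $M$, so that each $M/K_\lambda=\Coker(K_\lambda\hookrightarrow M)$ is finitely presented by the cokernel computation above; since $\im f\cong\varinjlim M/K_\lambda$ is finitely presented, the identity of $\im f$ factors through some $M/K_\lambda\twoheadrightarrow\im f$, making $\Ker f/K_\lambda$ a direct summand of the finitely presented $M/K_\lambda$ and hence finitely generated, so $\Ker f$ is finitely generated). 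Being a finitely generated subobject of the finitely presented $M$, $\Ker f$ is then finitely presented. Thus $\mathcal{A}$ is a full subcategory of the abelian category $\mathcal{C}$, closed there under finite biproducts, kernels and cokernels; since kernels and cokernels in $\mathcal{A}$ agree with those computed in $\mathcal{C}$, so do images and coimages, and the comparison morphism $\coim f\to\im f$ is an isomorphism in $\mathcal{A}$ because it is one in $\mathcal{C}$. Hence $\mathcal{A}$ is abelian.

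($\Leftarrow$) Suppose $\mathcal{A}=\fp(\mathcal{C})$ is abelian. By the structure theory of finitely accessible additive categories \cite{CB}, the functor $X\mapsto\Hom_{\mathcal{C}}(-,X)|_{\mathcal{A}}$ is an equivalence of $\mathcal{C}$ onto the category $\mathrm{Flat}(\mathcal{A}^{\op})$ of flat additive functors $\mathcal{A}^{\op}\to\Ab$ (those that are filtered colimits of representables), carrying the finitely presented objects of $\mathcal{C}$ exactly onto the representable functors. As $\mathcal{A}$ is abelian, $\mathcal{A}^{\op}$ has finite limits, so a flat additive functor $\mathcal{A}^{\op}\to\Ab$ is precisely a left exact one; hence $\mathcal{C}\simeq\mathrm{Lex}(\mathcal{A}^{\op},\Ab)$, the category of left exact additive functors. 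This is the standard model of a locally coherent Grothendieck category: it is a reflective subcategory of the Grothendieck category $\mathrm{Add}(\mathcal{A}^{\op},\Ab)$ with left exact reflector, hence itself Grothendieck; the Yoneda embedding $\mathcal{A}\to\mathrm{Lex}(\mathcal{A}^{\op},\Ab)$ is fully faithful and exact (left exact because $\mathrm{Lex}$ is closed under finite limits in $\mathrm{Add}$, right exact by a short Yoneda computation using left exactness of representables); using that idempotents split in the abelian category $\mathcal{A}$, the finitely presented objects of $\mathrm{Lex}(\mathcal{A}^{\op},\Ab)$ are exactly the representables; and these are coherent, since a finitely generated subobject of $\Hom_{\mathcal{A}}(-,B)$ is the image of some morphism $\Hom_{\mathcal{A}}(-,B')\to\Hom_{\mathcal{A}}(-,B)$, which by exactness of the Yoneda embedding is again representable, hence finitely presented. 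Transporting this along the equivalence, $\mathcal{C}$ is locally coherent.

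The main obstacle is assembling the converse correctly rather than carrying out any single computation: one must invoke, in the correct variance, both the identification $\mathcal{C}\simeq\mathrm{Flat}(\mathcal{A}^{\op})$ sending finitely presented objects to representables and the classical collapse of ``flat'' to ``left exact'' over a category with finite limits; once that is in place, the properties of $\mathrm{Lex}(\mathcal{A}^{\op},\Ab)$ used above — that it is Grothendieck, that the Yoneda embedding into it is exact, and that its finitely presented objects are the representables — are part of the standard theory. In the forward direction the only non-formal point is that the kernel of an epimorphism between finitely presented objects of a locally coherent category is finitely generated, which is the directed-union argument indicated above.
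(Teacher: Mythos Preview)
The paper does not prove this proposition at all: it is stated with a bare citation to \cite[Proposition~2.2]{Roos} and used as a black box later on (in Propositions~\ref{simple1} and in Corollary~\ref{fp-htpy}). There is therefore no ``paper's own proof'' to compare against.

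Your argument is correct and is essentially the standard one. The forward direction is exactly as you say: closure of $\fp(\mathcal{C})$ under cokernels is formal, and coherence is precisely what is needed to push kernels back into $\fp(\mathcal{C})$; your directed-union argument showing that the kernel of an epimorphism between finitely presented objects is finitely generated is the usual one. For the converse, identifying $\mathcal{C}$ with $\mathrm{Flat}(\mathcal{A}^{\op})$ via \cite{CB} and then observing that over an abelian $\mathcal{A}$ this collapses to $\mathrm{Lex}(\mathcal{A}^{\op},\Ab)$ is exactly Roos's viewpoint, and the verification that the finitely presented objects of the latter are the representables and are coherent is the standard Yoneda computation you indicate. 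One small cosmetic point: in your closure-under-cokernels step you wrote $\Hom_{\mathcal{C}}(-,X)$ where you meant $\Hom_{\mathcal{C}}(\Coker f,-)$ applied to a filtered system $(X_i)$; the intended meaning is clear from the displayed isomorphism that follows.
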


The reader is referred to \cite{CB} and \cite{Prest} for more information on finitely accessible categories.

\section{Associated Freyd categories}

Let $\mathcal{C}$ be an additive category. Following \cite{Bel}, we shall associate to $\mathcal{C}$ two additive
categories which are defined as follows. In the morphism category ${\rm Mor}(\mathcal{C})$ of $\mathcal{C}$ denote an
object $u:M\to N$ by $(M,u,N)$ and a morphism by $(f,g):(M',u',N')\to (M,u,N)$, where $f:M'\to M$ and $g:N'\to N$ are
such that $uf=gu'$. Consider in ${\rm Mor}(\mathcal{C})$ the full subcategories $\mathcal{X}$ and $\mathcal{Y}$ 
consisting of all split monomorphisms and all split epimorphisms respectively. Then the stable categories
$\mathcal{B}(\mathcal{C})={\rm Mor}(\mathcal{C})/\mathcal{X}$ and $\mathcal{A}(\mathcal{C})={\rm
Mor}(\mathcal{C})/\mathcal{Y}$ are called the \emph{Freyd categories} associated to $\mathcal{C}$. An object $(M,u,N)$
from ${\rm Mor}(\mathcal{C})$ will be denoted by $[M,u,N]$ and $\{M,u,N\}$ when viewed as an object in
$\mathcal{B}(\mathcal{C})$ and $\mathcal{A}(\mathcal{C})$ respectively. Also, a morphism $(f,g)$ from ${\rm
Mor}(\mathcal{C})$ will be denoted by  $[f,g]$ and $\{f,g\}$ when viewed in $\mathcal{B}(\mathcal{C})$ and
$\mathcal{A}(\mathcal{C})$ respectively.  The category $\mathcal{B}(\mathcal{C})$ may be viewed alternatively as ${\rm
Mor}(\mathcal{C})$ modulo the congruence generated by the subgroup of $\Hom_{\mathcal{C}}(M,N)$ consisting of all
morphisms $(f,g):(M',u',N')\to (M,u,N)$ for which there exists a morphism $\alpha:N'\to M$ such that $\alpha u'=f$, i.e.
by commuting square morphisms factoring as in the following diagram with the lower left triangle and the big square
commuting (only):
$$\xymatrix{
M\ar[r]^u \ar@{..} @/_1pc/ [dr] | {=} & N \\
M'\ar[u]^f\ar[r]_{u'} & N'\ar[ul]_\alpha\ar[u]_g
}$$
Indeed, if $(f,g):(M',u',N')\to (M,u,N)$ is a morphism in ${\rm Mor}(\mathcal{C})$, then $[f,g]=0$ in
$\mathcal{B}(\mathcal{C})$ if and only if there is a morphism $\alpha:N'\to M$ such that $\alpha u'=f$. In particular
$[M,u,N]=0$ if and only if $f$ is a split monomorphism. Therefore, $\mathcal{B}(\mathcal{C})$ is one of the homotopy
categories introduced by Freyd in \cite{Freyd}. 

We note that the subgroup of $\Hom_{\mathcal{C}}(M,N)$ determining the above congruence is the sum of two groups, namely
the group of all morphisms $(f,g):(M',u',N')\to (M,u,N)$ for which there exists a morphism $\alpha:N'\to M$ such that
$\alpha u'=f$ and $u\alpha=g$ and the group of all morphisms $(f,g):(M',u',N')\to (M,u,N)$ with $f=0$. This follows
since if $f=\alpha u'$ then $(f,g)=(\alpha u',u\alpha)+(0,g-u\alpha)$. It is also worth to note that the equivalence
relation corresponding to the first of these groups is just the usual homotopy equivalence of chain complexes,
restricted to bounded chain complexes of length $2$. 

Similarly, $\mathcal{A}(\mathcal{C})$ is the other homotopy category introduced by Freyd in \cite{Freyd}, since it may
be viewed alternatively as ${\rm Mor}(\mathcal{C})$ modulo the congruence generated by the subgroup of 
$\Hom_{\mathcal{C}}(M,N)$ (for each pair of objects $(M,N)$) consisting of all morphisms $(f,g):(M',u',N')\to (M,u,N)$
for which there exists a morphism
$\beta:N'\to M$ such that $u\beta=g$, i.e. by commuting square morphisms factoring as in the following diagram with the
upper right triangle and the big square commuting (only): 
$$\xymatrix{
M\ar[r]^u \ar@{..} @/^1pc/ [dr] | {=} & N \\
M'\ar[u]^f\ar[r]_{u'} & N'\ar[ul]^\beta\ar[u]_g
}$$

The Freyd categories are related as follows. We shall denote equivalences of categories by ``$\approx$''. 

\begin{proposition}{\cite[Proposition~3.6]{Bel}} \label{acbc} Let $\mathcal{C}$ and $\mathcal{D}$ be two
categories. Then:

(i) $\mathcal{C}\approx \mathcal{D}$ if and only if $\mathcal{A}(\mathcal{C})\approx
\mathcal{A}(\mathcal{D})$ if and only if $\mathcal{B}(\mathcal{C})\approx \mathcal{B}(\mathcal{D})$.

(ii) $\mathcal{A}(\mathcal{C}^{\rm op})\approx \mathcal{B}(\mathcal{C})^{\rm op}$ and
$\mathcal{B}(\mathcal{C}^{\rm op})\approx \mathcal{A}(\mathcal{C})^{\rm op}$. 
\end{proposition}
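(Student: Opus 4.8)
The plan is to recover each category from its Freyd categories by an equivalence-invariant construction, and then to deduce the statements about $\mathcal{B}$ from those about $\mathcal{A}$ using part (ii). Throughout I use that the categories are additive, and for the converse implications in (i) also that they are idempotent complete --- this is harmless for the intended applications, since the categories of finitely presented objects appearing later split idempotents (idempotents split in a Grothendieck category, and finitely presented objects are closed under direct summands). The easy implications in (i) are immediate: an equivalence $F:\mathcal{C}\to\mathcal{D}$ induces an equivalence ${\rm Mor}(F):{\rm Mor}(\mathcal{C})\to{\rm Mor}(\mathcal{D})$, $(M,u,N)\mapsto(FM,Fu,FN)$, and since $F$ preserves and reflects split monomorphisms and split epimorphisms, ${\rm Mor}(F)$ restricts to an equivalence of the subcategories $\mathcal{X}$ and to an equivalence of the subcategories $\mathcal{Y}$; it therefore descends to equivalences $\mathcal{A}(\mathcal{C})\approx\mathcal{A}(\mathcal{D})$ and $\mathcal{B}(\mathcal{C})\approx\mathcal{B}(\mathcal{D})$ of the stable quotients.

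For the converse in the case of $\mathcal{A}$, I would invoke the structure of $\mathcal{A}(\mathcal{C})$ from \cite{Freyd} (see also \cite{Bel}): $\mathcal{A}(\mathcal{C})$ has cokernels; the functor $\iota:\mathcal{C}\to\mathcal{A}(\mathcal{C})$, $M\mapsto\{0,0,M\}$ (the object $0\to M$), is fully faithful and takes values among the projective objects; and every object $\{M,u,N\}$ fits in a projective presentation $\{0,0,M\}\to\{0,0,N\}\to\{M,u,N\}\to 0$. Consequently any projective object $P$ of $\mathcal{A}(\mathcal{C})$ is a retract of some $\{0,0,N\}$, so $P$ is the image of an idempotent of $\End_{\mathcal{A}(\mathcal{C})}(\{0,0,N\})\cong\End_{\mathcal{C}}(N)$; since $\mathcal{C}$ is idempotent complete this idempotent splits already in $\mathcal{C}$, whence $P\cong\{0,0,N_1\}$ for a direct summand $N_1$ of $N$. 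Thus $\iota$ identifies $\mathcal{C}$ with the full subcategory ${\rm Proj}(\mathcal{A}(\mathcal{C}))$ of projective objects of $\mathcal{A}(\mathcal{C})$. As the property of being a projective object --- and therefore the full subcategory of projective objects --- is preserved by every equivalence, an equivalence $\mathcal{A}(\mathcal{C})\approx\mathcal{A}(\mathcal{D})$ restricts to ${\rm Proj}(\mathcal{A}(\mathcal{C}))\approx{\rm Proj}(\mathcal{A}(\mathcal{D}))$, that is, $\mathcal{C}\approx\mathcal{D}$.

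Part (ii) I would prove directly on morphism categories: reversing the internal arrow and interchanging the two components gives an isomorphism ${\rm Mor}(\mathcal{C}^{\rm op})\cong{\rm Mor}(\mathcal{C})^{\rm op}$, under which a split epimorphism in ${\rm Mor}(\mathcal{C}^{\rm op})$ corresponds to a split monomorphism in ${\rm Mor}(\mathcal{C})$; thus the subcategory $\mathcal{Y}$ used to form $\mathcal{A}(\mathcal{C}^{\rm op})$ corresponds to the subcategory $\mathcal{X}$ used to form $\mathcal{B}(\mathcal{C})$. Since forming the stable quotient by a full subcategory of objects commutes with passage to opposite categories, this yields $\mathcal{A}(\mathcal{C}^{\rm op})\approx({\rm Mor}(\mathcal{C})/\mathcal{X})^{\rm op}=\mathcal{B}(\mathcal{C})^{\rm op}$; applying this with $\mathcal{C}^{\rm op}$ in place of $\mathcal{C}$ and taking opposites gives $\mathcal{B}(\mathcal{C}^{\rm op})\approx\mathcal{A}(\mathcal{C})^{\rm op}$. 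The remaining $\mathcal{B}$-statement of (i) is then formal: $\mathcal{B}(\mathcal{C})\approx\mathcal{B}(\mathcal{D})$ iff $\mathcal{B}(\mathcal{C})^{\rm op}\approx\mathcal{B}(\mathcal{D})^{\rm op}$ iff $\mathcal{A}(\mathcal{C}^{\rm op})\approx\mathcal{A}(\mathcal{D}^{\rm op})$ iff $\mathcal{C}^{\rm op}\approx\mathcal{D}^{\rm op}$ iff $\mathcal{C}\approx\mathcal{D}$. The main obstacle is the middle step --- identifying the projective objects of $\mathcal{A}(\mathcal{C})$ --- which rests on Freyd's structure theorem (cokernels and projective presentations in $\mathcal{A}(\mathcal{C})$) and genuinely uses that $\mathcal{C}$ has split idempotents: for an arbitrary additive $\mathcal{C}$ one recovers only its idempotent completion, so that $\mathcal{A}(\mathcal{C})\approx\mathcal{A}(\mathcal{D})$ would then pin down $\mathcal{C}$ and $\mathcal{D}$ only up to idempotent completion.
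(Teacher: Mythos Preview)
The paper does not give its own proof of this proposition: it is simply quoted from \cite[Proposition~3.6]{Bel} and used as a black box. So there is nothing in the paper to compare against, and your task reduces to whether the argument you sketch is sound.

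Your argument is essentially the standard one (and, in outline, the one in \cite{Bel}). The forward direction of (i) and all of (ii) are routine, and you handle them correctly: an equivalence $\mathcal{C}\approx\mathcal{D}$ passes to ${\rm Mor}(\mathcal{C})\approx{\rm Mor}(\mathcal{D})$ preserving $\mathcal{X}$ and $\mathcal{Y}$, and the isomorphism ${\rm Mor}(\mathcal{C}^{\rm op})\cong{\rm Mor}(\mathcal{C})^{\rm op}$ swaps $\mathcal{X}$ and $\mathcal{Y}$, giving (ii). Your reduction of the $\mathcal{B}$-case of (i) to the $\mathcal{A}$-case via (ii) is clean. For the hard direction you correctly invoke Freyd's structure: $\mathcal{A}(\mathcal{C})$ has cokernels, $\iota(M)=\{0,0,M\}$ is projective, every object admits a projective presentation by such objects, and hence the projectives of $\mathcal{A}(\mathcal{C})$ form (the idempotent completion of) $\mathcal{C}$. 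One small remark: you should also note that $\iota$ is fully faithful because $\Hom_{\mathcal{A}(\mathcal{C})}(\{0,0,M\},\{0,0,N\})=\Hom_{\mathcal{C}}(M,N)$, which follows from the description of the congruence defining $\mathcal{A}(\mathcal{C})$; you assert this but it is worth one line.

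Your caveat about idempotent completeness is exactly right and is the only genuine subtlety: in full generality, $\mathcal{A}(\mathcal{C})\approx\mathcal{A}(\mathcal{D})$ forces only that $\mathcal{C}$ and $\mathcal{D}$ have equivalent idempotent completions. In \cite{Bel} this is handled by the standing convention that idempotents split; for the applications in this paper the relevant categories are $\fp(\mathcal{C})$ for locally coherent (even locally finite) $\mathcal{C}$, which are Krull--Schmidt and in particular idempotent complete, so your hypothesis is harmless here. It would be good to say this explicitly rather than leave it as a parenthetical.
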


\begin{proposition}\label{Dualities} Let $\mathcal{C}$ be an abelian category. 

(i) The category $\mathcal{B}(\mathcal{C})$ is an abelian category, equivalent to the category of exact chain complexes
of type $M\rightarrow N\rightarrow P\rightarrow 0$ of objects in $\mathcal{C}$ up to the usual homotopy equivalence.

(ii) The category $\mathcal{A}(\mathcal{C})$ is an abelian category, equivalent to the category of exact chain complexes
of type $0\rightarrow K\rightarrow M\rightarrow N$ of objects in $\mathcal{C}$ up to the usual homotopy equivalence.
\end{proposition}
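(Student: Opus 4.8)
The plan is to prove (i) in detail; part (ii) then follows by the same argument with kernels replacing cokernels --- using the category of exact complexes $0\to K\to M\to N$ and the functor sending $(M,u,N)$ to $0\to\Ker u\to M\to N$ --- or, equivalently, by applying (i) to the abelian category $\Cc^{\op}$ together with Proposition~\ref{acbc}(ii). For (i), I would first introduce the additive category $\Ee$ whose objects are the exact complexes $M\overset{d_2}{\to}N\overset{d_1}{\to}P\to 0$ of objects of $\Cc$ (so $\im d_2=\Ker d_1$ and $d_1$ is an epimorphism) and whose morphisms are chain maps, and relate it to $\Mor(\Cc)$. Define $F\colon\Mor(\Cc)\to\Ee$ on objects by $F(M,u,N)=(M\overset{u}{\to}N\overset{\pi}{\to}\Coker u\to 0)$ and on a morphism $(f,g)\colon(M',u',N')\to(M,u,N)$ by $F(f,g)=(f,g,h)$, where $h\colon\Coker u'\to\Coker u$ is the unique morphism with $h\pi'=\pi g$ (it exists because $\pi g u'=\pi u f=0$); this is a well-defined additive functor. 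Define $G\colon\Ee\to\Mor(\Cc)$ by discarding the last term. The structural observation is that in every object of $\Ee$ the morphism $d_1$ is forced to be a cokernel of $d_2$ --- it is an epimorphism with kernel $\im d_2$ --- so for a chain map between objects of $\Ee$ the third component is uniquely determined by the first two; hence $G$ is full and faithful. Since $GF=\mathrm{id}_{\Mor(\Cc)}$ and $FG\cong\mathrm{id}_{\Ee}$ (via the canonical isomorphisms $\Coker d_2\cong P$), the functors $F$ and $G$ are mutually quasi-inverse equivalences, so $\Mor(\Cc)\approx\Ee$.

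Next I would verify that this equivalence carries the congruence defining $\Bb(\Cc)=\Mor(\Cc)/\mathcal{X}$ onto chain homotopy in $\Ee$, which then yields the asserted description of $\Bb(\Cc)$ as the homotopy category of $\Ee$. For two morphisms $(f,g),(f',g')\colon(M',u',N')\to(M,u,N)$ of $\Mor(\Cc)$, one compares the condition $[f-f',g-g']=0$ in $\Bb(\Cc)$ with the condition that the chain map $F(f-f',g-g')$, whose components are $(f-f',g-g',h-h')$, be null-homotopic; the latter amounts to morphisms $s_1\colon N'\to M$ and $s_0\colon\Coker u'\to N$ with $f-f'=s_1u'$, $g-g'=us_1+s_0\pi'$ and $h-h'=\pi s_0$. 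If such $s_1,s_0$ exist then $\alpha:=s_1$ witnesses $[f-f',g-g']=0$. Conversely, given $\alpha\colon N'\to M$ with $\alpha u'=f-f'$, one computes $(g-g'-u\alpha)u'=(g-g')u'-u\alpha u'=u(f-f')-u(f-f')=0$, so $g-g'-u\alpha$ factors uniquely through $\pi'=\Coker u'$ as $s_0\pi'$; then $(\alpha,s_0)$ is a null-homotopy, the remaining identity $h-h'=\pi s_0$ being automatic because $\pi'$ is an epimorphism and $\pi s_0\pi'=\pi(g-g'-u\alpha)=\pi(g-g')=(h-h')\pi'$. This reproduces exactly the two-summand description of the subgroup defining $\Bb(\Cc)$ recalled above. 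Passing to quotients, $F$ induces the required equivalence.

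It remains that $\Bb(\Cc)$, and hence by the first part also $\Aa(\Cc)$, is abelian: this is the general fact that the Freyd category $\Bb(\Cc)$ is abelian whenever $\Cc$ has weak cokernels and $\Aa(\Cc)$ is abelian whenever $\Cc$ has weak kernels \cite{Bel} (see also \cite{Freyd}), and an abelian category has both. For a self-contained argument one constructs kernels and cokernels of a chain map in the homotopy category of $\Ee$ directly out of the abelian structure of $\Cc$ --- essentially from suitable truncations of the mapping cone and its shift --- and checks that the canonical morphism from coimage to image is an isomorphism; this last verification is the only non-formal ingredient. The main obstacle in the whole argument is the matching, in the second step, of chain homotopy with the Freyd congruence --- concretely, showing that a single-morphism homotopy datum $\alpha$ always extends to a genuine chain homotopy --- which rests entirely on $d_1$ being a cokernel of $d_2$ in every object of $\Ee$.
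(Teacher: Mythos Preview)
Your proof is correct and follows essentially the same approach as the paper's: define the functor $[M,u,N]\mapsto(M\to N\to\Coker u\to 0)$, verify that the Freyd congruence on $\Mor(\Cc)$ matches chain homotopy in $\Ee$ (the key computation being that an $\alpha$ with $\alpha u'=f-f'$ extends to a full homotopy via the cokernel property of $\pi'$), and invoke \cite{Bel} for abelianness. Your write-up is a bit more explicit in first establishing $\Mor(\Cc)\approx\Ee$ and in spelling out both directions of the congruence correspondence, but the content is the same.
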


\begin{proof} (i) The category $\mathcal{B}(\mathcal{C})$ is abelian by \cite[Proposition~4.5]{Bel}. The second part of
(i) follows by considering the functor given on objects by \[[M,u,N]\,\, \longmapsto \,\,
(M\stackrel{u}{\rightarrow}N\rightarrow \Coker(u)\rightarrow 0)\] and observing that any morphism
$[f,g]:[M',u',N']\to [M,u,N]$ in $\mathcal{B}(\mathcal{C})$ can be extended to a morphism
$h:\Coker(u')\rightarrow \Coker(u)$. If $[f,g]=0$, then there is $\alpha:N'\to M$ such that $\alpha u'=f$. Since
$(u\alpha -g)u'=u\alpha u'-gu'=uf-gu'=0$, we see that $\Ker(p')=\im(u')\subseteq \Ker(u\alpha-g)$ so then there is 
$\alpha':\Coker(u)\to N$ such that $\alpha'p'+u\alpha=g$: 
$$\xymatrix{
\ar@{} & M\ar[r]^u \ar@{..}
@/_1pc/ [dr] | {=} \ar@{.}@/_1.4pc/[dr] \ar@{.}@/_/[dr]            & N \ar[r]^(.4)p         \ar@{..} @/^1pc/ [dr] | {=}
\ar@{.}@/^1.5pc/[dr] \ar@{.}@/^/[dr]   & \Coker(u) \ar[r] & 0  & \,\\
 & M'\ar[u]^f\ar[r]_{u'} & N'\ar[ul]|\alpha\ar[u]_g|= \ar[r]_(.4){p'}   \ar@/_1pc/@{.}[u] | + \ar@/^0.7pc/@{.}[u] | +  
& \Coker(u')\ar@{..>}[ul] | {\alpha'}\ar[u]_{h}\ar[r] & 0 & \,
}$$
Moreover, we have that $hp'=pg=p\alpha'p'+pu\alpha=p\alpha'p'$, and so $h=p\alpha'$ ($p'$ is an epimorphism), so
$(f,g,h)$ is a null-homotopic morphism of chain complexes. The inverse functor is obvious. 

(ii) is analogous to (i).

We note that the result of \cite{Bel} that $\mathcal{A}(\mathcal{C})$ and $\mathcal{B}(\mathcal{C})$ are
abelian also follows by the above equivalences.
\end{proof}

The full subcategories of finitely presented left or right modules over the functor ring of a locally coherent
category $\mathcal{C}$ are closely related to the Freyd categories associated to $\mathcal{C}$ (for a general case, see
\cite[Corollary~3.9]{Bel}).

\begin{corollary} \label{fp-htpy} Let $\mathcal{C}$ be a locally coherent category and let $R$ be its functor ring. Then
 ${\rm fp}(\Mm_R)\approx \mathcal{A}({\rm fp}(\mathcal{C}))$ and ${\rm fp}({}_R\Mm)\approx \mathcal{B}({\rm
fp}(\mathcal{C}))^{\rm op}$.
\end{corollary}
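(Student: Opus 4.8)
The plan is to combine Proposition~\ref{AbCat}, the explicit description of finitely presented modules over the functor ring $R$, and the model of the Freyd categories given by Proposition~\ref{Dualities}. First I would observe that since $\Cc$ is locally coherent, Proposition~\ref{AbCat} tells us that $\fp(\Cc)$ is an abelian category (with finite coproducts), so Proposition~\ref{Dualities} applies to $\mathcal{A}(\fp(\Cc))$ and $\mathcal{B}(\fp(\Cc))$: these are abelian categories realizable as homotopy categories of short exact-type chain complexes of finitely presented objects of $\Cc$.

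Next I would construct the equivalence $\fp({}_R\Mm)\approx \mathcal{B}(\fp(\Cc))^{\op}$ directly. Recall from Section~2 that a left $R$-module $X$ is finitely presented iff there is an exact sequence $\Hom_{\Cc}(N,U)\to \Hom_{\Cc}(M,U)\to X\to 0$ with $M,N\in\fp(\Cc)$; equivalently, $X$ is the cokernel of the image under the Yoneda functor $\Hom_{\Cc}(-,U)$ of a morphism $u:M\to N$ in $\fp(\Cc)$. The assignment $[M,u,N]\mapsto \Coker\big(\Hom_{\Cc}(N,U)\xrightarrow{\Hom(u,U)}\Hom_{\Cc}(M,U)\big)$ is the candidate functor; since $\Hom_{\Cc}(-,U)$ is contravariant, full, faithful, left exact, and carries split monomorphisms to split epimorphisms (hence to zero in the relevant quotient), one checks it is well-defined on $\mathcal{B}(\fp(\Cc))^{\op}$ and kills exactly the null-homotopic morphisms, using the triangle-factorization description of $[f,g]=0$ from Section~3 transported across Yoneda. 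It is essentially surjective by the finite-presentation description of $X$, and faithful/full because the Yoneda functor reflects the relevant data; the homotopy relation on the morphism category corresponds precisely to the subgroup identified before Proposition~\ref{Dualities}. The statement $\fp(\Mm_R)\approx \mathcal{A}(\fp(\Cc))$ then follows formally: a right $R$-module is a left $R^{\op}$-module, $R^{\op}$ is the functor ring of $\Cc^{\op}$, $\fp(\Cc^{\op})=\fp(\Cc)^{\op}$, and Proposition~\ref{acbc}(ii) gives $\mathcal{B}(\fp(\Cc)^{\op})^{\op}\approx \mathcal{A}(\fp(\Cc))$.

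Alternatively, and more economically, I would simply quote \cite[Corollary~3.9]{Bel} for a general additive category together with the identification, valid for a locally coherent $\Cc$, of $\fp$ of the functor ring with the appropriate Freyd category; the role of local coherence is exactly to guarantee (via Proposition~\ref{AbCat}) that $\fp(\Cc)$ is abelian so that the two Freyd categories are abelian and the finitely presented module categories inherit this. The main obstacle I anticipate is the bookkeeping in verifying that the homotopy equivalence relation on $\Mor(\fp(\Cc))$ matches, under $\Hom_{\Cc}(-,U)$, the equivalence relation by which one must quotient in order to get genuine morphisms of (not just chain maps up to homotopy, but) presentations of finitely presented $R$-modules — i.e. checking that a morphism of presentations inducing the zero map on cokernels corresponds to a $[f,g]$ that is null in $\mathcal{B}$, which uses both summands of the congruence subgroup described after the first diagram in Section~3. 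Everything else is a routine transport of structure along the Yoneda functor whose properties are already recorded in Section~2.
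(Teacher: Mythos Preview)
Your proposal is correct and follows essentially the same route as the paper's sketch: invoke Proposition~\ref{AbCat} to get $\fp(\Cc)$ abelian, then identify a finitely presented left $R$-module with the cokernel of $\Hom_\Cc(u,U)$ for some $u:M\to N$ in $\fp(\Cc)$ and check that the congruence defining $\mathcal{B}(\fp(\Cc))$ matches the relation ``same cokernel'' under the contravariant Yoneda functor. One small caveat: the phrase ``$R^{\op}$ is the functor ring of $\Cc^{\op}$'' is not literally correct, since $\Cc^{\op}$ is almost never finitely accessible; what you really use is that $R^{\op}$ is the ring built from the small abelian category $\fp(\Cc)^{\op}$, and the argument for the second equivalence only needs that input---your fallback to \cite[Corollary~3.9]{Bel} (which the paper also cites) handles this in any case.
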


\begin{proof} We sketch the second part. Note that $\fp(\mathcal{C})$ is abelian by Proposition \ref{AbCat}. A finitely
presented left $R$-module is the cokernel of a morphism between finitely generated left $R$-modules. Using the duality
between the categories of finitely generated projective left $R$-modules and finitely generated projective right
$R$-modules, the equivalence between the category finitely generated projective right $R$-modules and
$\fp(\mathcal{C})$, and Proposition \ref{Dualities}, we obtain a duality between ${\rm fp}({}_R\Mm)$ and
$\mathcal{B}({\rm fp}(\mathcal{C}))$. The duality is explicitly given on objects as follows. Let $(U_i)_{i\in I}$ be a
family of representatives of the isomorphism classes of finite dimensional left $C$-comodules and denote
$U=\bigoplus_{i\in I}U_i$. We have seen that an object $X$ of ${}_R\Mm$ is finitely presented if and only if there is a
finite presentation $\Hom_{\mathcal{C}}(N,U)\to \Hom_{\mathcal{C}}(M,U)\to X\to 0$ for some objects $M$ and $N$ in
$\fp(\mathcal{C})$. This is induced by an object $[M,u,N]$ of $\mathcal{B}({\rm fp}(\mathcal{C}))$. 
\end{proof}

Now we can give an interpretation of symmetry of finitely accessible categories in terms of Freyd categories. 

\begin{definition} \rm Let $\mathcal{C}$ and $\mathcal{D}$ be two finitely accessible categories. We call $\mathcal{C}$
and $\mathcal{D}$ \emph{symmetric} categories if there is a duality between the categories
$\mathcal{B}({\rm fp}(\mathcal{C}))$ and $\mathcal{B}({\rm fp}(\mathcal{D}))$. 
\end{definition}

Let us note that $\mathcal{C}$ and $\mathcal{D}$ are symmetric in the above sense if and only if they are symmetric 
in the sense of Dung and Garc\'\i a \cite{DG}. To this end, let us denote by $R$ and $L$ the functor rings of
$\mathcal{C}$ and $\mathcal{D}$ respectively. Then by Proposition \ref{fp-htpy} we have $\mathcal{B}({\rm
fp}(\mathcal{C}))\approx \mathcal{B}({\rm fp}(\mathcal{D}))^{\rm op}$ if and only if $({\rm
fp}({}_R\mathcal{M}))^{\rm op}\approx {\rm fp}({}_L\mathcal{M})$ if and only if $\mathcal{C}$ and $\mathcal{D}$
are symmetric in the sense of \cite[Definition~2.8]{DG}.

\section{Coalgebras and comodules}

Now we recall several facts on coalgebras and comodules, mainly following \cite{DNR}. Let $C$ be a coalgebra over a
field $k$. Denote by $C^*=\Hom_k(C,k)$ the dual algebra of $C$ over $k$. Then $C^*$ is a topological vector space
endowed with the weak-$*$ topology, in which the closed subspaces are annihilators in $C^*$ of subspaces of $C$. 
In this topology, $C^*$ has a basis of neighbourhoods of $0$ consisting of ideals of finite codimension.
The coalgebra $C$ is called \emph{left $\mathcal{F}$-noetherian} if every closed and cofinite left ideal of $C^*$ (in
the sense that $C^*/I$ is finite dimensional) is finitely generated in $\mathcal{M}_{C^*}$ (see \cite{Radford} and
\cite{CNV}). In particular, every right semiperfect coalgebra is left $\mathcal{F}$-noetherian
\cite[Theorem~2.12]{CNV}. The coalgebra $C$ is called \emph{left strongly reflexive} or \emph{left almost noetherian} 
(or $C^*$ is \emph{left almost noetherian}, see \cite{Radford} and
\cite{CNV}) if every cofinite left ideal $I$
of $C^*$ is finitely generated \cite{CNV}.  Clearly, every left almost noetherian coalgebra is
left $\mathcal{F}$-noetherian.

A right $C^*$-module $M$ is called \emph{rational} if for every $x\in M$, there are $x_1,\dots,x_n\in M$ and
$c_1,\dots,c_n\in C$ such that $$xc^*=\sum_{i=1}^nx_ic^*(c_i)$$ for every $c^*\in C^*$. The class
$\Rat(\mathcal{M}_{C^*})$ of rational right $C^*$-modules is closed under submodules, direct sums, direct products and
homomorphic images. In fact, $\Rat(\mathcal{M}_{C^*})=\sigma[C_{C^*}]$, where $\sigma[C_{C^*}]$ is the full subcategory
of the category $\mathcal{M}_{C^*}$ of right $C^*$-modules consisting of the modules subgenerated by $C$. 

Denote by ${}^C\mathcal{M}$ the category of left $C$-comodules. Then there is an isomorphism of categories
${}^C\mathcal{M}\cong \Rat(\mathcal{M}_{C^*})$. We shall frequently make the identification between left $C$-comodules
and rational right $C^*$-modules. The category of rational right $C^*$-modules, and so the category of left
$C$-comodules, is a Grothendieck category, which has a family of finite dimensional generators, namely the rational
right $C^*$-modules of the form $C^*/I$ with $I$ a closed cofinite (two-sided) ideal of $C^*$. It is easy to note that a
$C$-comodule is finitely presented if and only if it is finitely generated if and only if it is finite dimensional. We
denote by $\fd({}^C\Mm)$ the class of finite dimensional left $C$-comodules. Similar considerations may be made for the
categories $\mathcal{M}{}^C$ of right $C$-comodules and $\Rat({}_{C^*}\mathcal{M})$ of rational left $C^*$-modules. 
Note that the functor $(-)^*$ defines a duality between the categories $\fd({}^C\Mm)$ and  $\fd(\Mm^C)$ \cite{DNR}. 
We also refer to \cite{DNR} and \cite{BW} for basics on coalgebras and their comodules.

We note an interesting characterization of the category $\Mor(\fd(\Mm^C))$. One can prove without much
difficulty that this category is equivalent (in fact, isomorphic) to the category of finite dimensional right comodules
over the upper triangular matrix coalgebra 
$M_\Delta^2(C)=\left(
\begin{array}{cc}
	C & C \\
	0 & C
\end{array}
\right)$
with comultiplication and counit given by (we use the Sweedler notation with the summation symbol omitted)
\begin{eqnarray*}
\left(\begin{array}{cc} x & y \\ 0 & z \end{array} \right) & \longmapsto & 
\left(\begin{array}{cc} x_1 & 0 \\ 0 & 0 \end{array} \right)\otimes \left(\begin{array}{cc} x_2 & 0 \\ 0 & 0 \end{array}
\right) + \left(\begin{array}{cc} y_1 & 0 \\ 0 & 0 \end{array} \right) \otimes \left(\begin{array}{cc} 0 & y_2 \\ 0 & 0
\end{array} \right) + \\
& & + \left(\begin{array}{cc} 0 & y_1 \\ 0 & 0 \end{array} \right) \otimes \left(\begin{array}{cc} 0 & 0 \\ 0 & y_2
\end{array} \right) + \left(\begin{array}{cc} 0 & 0 \\ 0 & z_1 \end{array} \right) \otimes \left(\begin{array}{cc} 0 & 0
\\ 0 & z_2 \end{array} \right), \\
\left(\begin{array}{cc} x & y \\ 0 & z \end{array} \right) & \longmapsto & \varepsilon_C(x)\varepsilon_C(z).
\end{eqnarray*}
The dual algebra of this coalgebra is precisely the algebra $M_{2}^\Delta(C^*)$ of upper triangular matrices with
entries in $C^*$. The stated equivalence associates to any $M\stackrel{u}{\longrightarrow}N$ the $M^2_\Delta(C)$-comodule $M\oplus
N$ with coaction given by:
\begin{eqnarray*}
\left(\begin{array}{c} m \\ n  \end{array}\right) & \longmapsto & \left(\begin{array}{c} n_0 \\ 0 \end{array} \right)
\otimes \left(\begin{array}{cc} n_1 & 0 \\ 0 & 0 \end{array} \right) + \left(\begin{array}{cc} f(m_0) \\ 0 \end{array}
\right) \otimes \left(\begin{array}{cc} 0 & m_1 \\ 0 & 0 \end{array} \right) + \left(\begin{array}{cc} 0  \\ n_0 
\end{array} \right) \otimes \left(\begin{array}{cc} 0 & 0 \\ 0 & m_1 \end{array} \right).
\end{eqnarray*}
The inverse of this equivalence goes as follows. Denote \[E=\left(\begin{array}{cc} 0 & 0 \\ 0 & \varepsilon
\end{array} \right), \quad N=\left(\begin{array}{cc} 0 & \varepsilon \\ 0 & 0 \end{array} \right), \quad 
F=\left(\begin{array}{cc} \varepsilon & 0 \\ 0 & 0 \end{array} \right)\] as elements of $M_2^\Delta(C^*)$. Then, for a
right $M^2_\Delta(C)$-comodule $T$, we have $T=F\cdot T\oplus E\cdot T$ and the inverse functor associates the object
$N:E\cdot T\rightarrow F\cdot T$ in $\Mor(\fd(\Mm^C))$, with the morphism $N$ given by left multiplication by $N\in
M_2^\Delta(C^*)=(M^2_\Delta(C))^*$. The correspondence on morphisms is similar.

\section{Positive answers}

In what follows we shall discuss symmetry for comodule categories, or equivalently, the existence of a Gruson-Jensen
duality in the case of comodule categories. 

As noted in the introduction, the case of a (left and right) semiperfect coalgebra can be solved by reducing it to a
result for module categories over rings with enough idempotents. We now give some details. 

\begin{theorem} \label{t:semiperf} Let $C$ be a (left and right) semiperfect coalgebra. Then the categories ${}^C\Mm$
and $\Mm^C$ are symmetric. More precisely, there is a duality between $\fp({}_R\Mm)$ and $\fp(\Mm_R)$. \\
Moreover, if $C$ is a right semiperfect coalgebra and the categories ${}^C\Mm$ and $\Mm^C$ are symmetric, 
then $C$ is (left and right) semiperfect.
\end{theorem}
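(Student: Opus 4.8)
The plan is to treat the two assertions separately, reducing each to the Gruson–Jensen duality for rings with enough idempotents \cite[Theorem~2.9]{DG}, and to use two preliminary remarks. First, both ${}^C\Mm$ and $\Mm^C$ are locally coherent: they are locally finite, so their finitely presented objects are the finite dimensional comodules, and $\fd({}^C\Mm)$ and $\fd(\Mm^C)$ are abelian, so Proposition~\ref{AbCat} applies and Corollary~\ref{fp-htpy} is available. Second, for \emph{any} coalgebra $C$ the functor ring of $\Mm^C$ is the opposite $R^{\op}$ of the functor ring $R$ of ${}^C\Mm$, because the $k$-dual functor $(-)^{*}$ is a duality $\fd({}^C\Mm)\to\fd(\Mm^C)$ and the functor ring depends only on the additive category of finitely presented objects. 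In particular, by the discussion following the definition of symmetric categories, ${}^C\Mm$ and $\Mm^C$ are symmetric if and only if $\fp({}_R\Mm)\approx\fp(\Mm_R)^{\op}$, which is the refined statement.

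For the first assertion, recall from \cite{DNR} that since $C$ is left and right semiperfect there is a ring $A$ with enough idempotents such that ${}^C\Mm\approx\Mm_A$ and, moreover, $\Mm^C\approx{}_A\Mm=\Mm_{A^{\op}}$. Applying \cite[Theorem~2.9]{DG} to $A$ shows that $\Mm_A$ and $\Mm_{A^{\op}}$ are symmetric, and transporting along the two equivalences gives that ${}^C\Mm$ and $\Mm^C$ are symmetric. Since $R$ is the functor ring of $\Mm_A$ and $R^{\op}$ is the functor ring of $\Mm_{A^{\op}}$, the duality furnished by \cite[Theorem~2.9]{DG} is precisely a duality between $\fp({}_R\Mm)$ and $\fp({}_{R^{\op}}\Mm)=\fp(\Mm_R)$.

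For the second assertion, I first record the formal fact that if a finitely accessible category $\mathcal{D}$ is symmetric to each of $\mathcal{E}_1$ and $\mathcal{E}_2$, then $\mathcal{E}_1\approx\mathcal{E}_2$: from $\mathcal{B}(\fp(\mathcal{E}_1))\approx\mathcal{B}(\fp(\mathcal{D}))^{\op}\approx\mathcal{B}(\fp(\mathcal{E}_2))$ and Proposition~\ref{acbc}(i) one gets $\fp(\mathcal{E}_1)\approx\fp(\mathcal{E}_2)$, hence $\mathcal{E}_1\approx\mathcal{E}_2$ because a finitely accessible category is determined by its finitely presented objects. Now assume $C$ is right semiperfect and ${}^C\Mm$, $\Mm^C$ are symmetric. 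By \cite{DNR}, one of ${}^C\Mm$, $\Mm^C$ is equivalent to $\Mm_B$ for a ring $B$ with enough idempotents; by \cite[Theorem~2.9]{DG} this category is symmetric to ${}_B\Mm=\Mm_{B^{\op}}$, and since by hypothesis it is also symmetric to the other comodule category, the formal fact shows that the other comodule category is equivalent to ${}_B\Mm$. Thus both ${}^C\Mm$ and $\Mm^C$ are equivalent to categories of unitary modules over rings with enough idempotents, so both have enough projectives, and therefore $C$ is left and right semiperfect by \cite{DNR}.

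The step I expect to be the main obstacle is the first reduction in each part: passing from semiperfectness of $C$ to an equivalence with a module category over a ring with enough idempotents, and — for the first assertion — arranging that a single ring $A$ realizes ${}^C\Mm$ as $\Mm_A$ and $\Mm^C$ as ${}_A\Mm$; this is exactly where \cite{DNR} is invoked. The remaining steps are formal manipulations with Freyd categories and the Gruson–Jensen duality.
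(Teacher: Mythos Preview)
Your proposal is correct and follows essentially the same route as the paper: reduce to \cite[Theorem~2.9]{DG} via an equivalence of the comodule categories with module categories over a ring with enough idempotents, and use the $(-)^*$ duality on finite dimensional comodules to identify the functor ring of $\Mm^C$ with $R^{\op}$. The only differences are cosmetic: the paper names the ring explicitly as $\Rat(C^*)$ rather than citing \cite{DNR} abstractly, and for the converse it invokes uniqueness of the symmetric category implicitly, whereas you spell it out as a ``formal fact'' via Proposition~\ref{acbc}(i).
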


\begin{proof} Denote by $R$ and $L$ the functor rings of ${}^C\Mm$ and $\Mm^C$ respectively. Since $C$ is (left and
right) semiperfect, it is known and not difficult to show (see, for example, \cite[Chapter 3]{DNR}) that
$\Rat({}_{C^*}C^*)=\Rat(C^*_{C^*})$ is an idempotent ideal of $C^*$ denoted simply $\Rat(C^*)$. Moreover, $\Rat(C^*)$ is
a ring with enough idempotents, and the category of rational left $C^*$-modules (right $C$-comodules) is the same as
that of left $\Rat(C^*)$-modules (see also \cite{BW}). Similarly, the category of left $C$-comodules is the same as that
of right $\Rat(C^*)$-modules. Therefore, applying \cite[Theorem~2.9]{DG} (which solves the case of unitary modules over
a ring with enough idempotents), there is a duality between $\fp({}_R\Mm)$ and $\fp(\Mm_{L^{\rm op}})$. But $R\simeq
L^{\rm op}$ since \[R=\bigoplus\limits_{M,N\in {}^C\Mm}{}^C\Hom(M,N)\simeq
\bigoplus\limits_{M,N\in{}^C\Mm}{}^C\Hom(N^*,M^*)=\bigoplus\limits_{P,Q\in\Mm^C}\Hom^C(P,Q)=L^{\rm op}\] and, 
because $(-)^*$ is a contravariant functor, the multiplication of $R$ (composition) is reverted.\\
For the last part, note that for a right semiperfect coalgebra the category $\Mm^C$ has a generating family of small
projective objects (see \cite[Chapter 3]{DNR}), and so, by well known results of category theory 
(e.g. see \cite{H}) it is equivalent to the category of unital left $A$-modules for a ring $A$ with enough idempotents.
In this case, its symmetric category ${}^C\Mm$ must be equivalent to the category of right unital $A$-modules by
\cite[Theorem 2.9]{DG}, and so it has a generating set of projective objects. Therefore, $C$ is also left semiperfect.
\end{proof}

\begin{proposition} The categories ${}^C\Mm$ and $\Mm^C$ are symmetric if and only if $\mathcal{A}({\rm
fd}({}^C\Mm))\approx \mathcal{B}({\rm fd}({}^C\Mm))$ if and only if $\mathcal{A}({\rm
fd}(\Mm^C))\approx \mathcal{B}({\rm fd}(\Mm^C))$. 
\end{proposition}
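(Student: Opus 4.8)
The plan is to translate the statement about symmetry of ${}^C\Mm$ and $\Mm^C$ into the language of Freyd categories and then use the duality $(-)^*$ between finite dimensional left and right comodules. By definition, ${}^C\Mm$ and $\Mm^C$ are symmetric exactly when there is a duality $\mathcal{B}(\fd({}^C\Mm))\approx \mathcal{B}(\fd(\Mm^C))^{\op}$. The key input is the self-duality $(-)^*\colon\fd({}^C\Mm)\to\fd(\Mm^C)$ recalled from \cite{DNR}: this is an equivalence $\fd({}^C\Mm)\approx \fd(\Mm^C)^{\op}$, i.e. $\fd(\Mm^C)\approx\fd({}^C\Mm)^{\op}$.

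The main steps, in order, would be: First, apply Proposition \ref{acbc}(i) to the equivalence $\fd(\Mm^C)\approx \fd({}^C\Mm)^{\op}$ to obtain $\mathcal{B}(\fd(\Mm^C))\approx \mathcal{B}(\fd({}^C\Mm)^{\op})$ and $\mathcal{A}(\fd(\Mm^C))\approx \mathcal{A}(\fd({}^C\Mm)^{\op})$. Second, invoke Proposition \ref{acbc}(ii), which gives $\mathcal{B}(\fd({}^C\Mm)^{\op})\approx \mathcal{A}(\fd({}^C\Mm))^{\op}$ and $\mathcal{A}(\fd({}^C\Mm)^{\op})\approx \mathcal{B}(\fd({}^C\Mm))^{\op}$. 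Composing, $\mathcal{B}(\fd(\Mm^C))\approx \mathcal{A}(\fd({}^C\Mm))^{\op}$. Third, unwind the definition of symmetry: ${}^C\Mm$ and $\Mm^C$ are symmetric $\iff \mathcal{B}(\fd({}^C\Mm))\approx \mathcal{B}(\fd(\Mm^C))^{\op} \iff \mathcal{B}(\fd({}^C\Mm))\approx \mathcal{A}(\fd({}^C\Mm))$, using the previous composite (taking opposites of both sides of the last equivalence, noting $((-)^{\op})^{\op}$ is the identity). This proves the first ``if and only if''. Fourth, for the second equivalence, run the symmetric argument: apply Proposition \ref{acbc}(i),(ii) to the same self-duality but now transporting the roles, to convert ${}^C\Mm$ and $\Mm^C$ symmetric into $\mathcal{A}(\fd(\Mm^C))\approx \mathcal{B}(\fd(\Mm^C))$; alternatively, note that by Proposition \ref{acbc}(ii) applied once more, $\mathcal{A}(\fd({}^C\Mm))\approx \mathcal{B}(\fd({}^C\Mm))$ is equivalent to $\mathcal{B}(\fd(\Mm^C)^{\op})^{\op}\approx\mathcal{A}(\fd(\Mm^C)^{\op})^{\op}$, i.e. to $\mathcal{B}(\fd(\Mm^C))\approx\mathcal{A}(\fd(\Mm^C))$, and then apply the self-duality equivalence via Proposition \ref{acbc}(i) to transfer between the ${}^C\Mm$ and $\Mm^C$ versions.

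The bookkeeping of opposites is the part that needs the most care: one must be consistent about whether the self-duality is being used as an equivalence between a category and the opposite of the other, and Proposition \ref{acbc}(ii) turns $\mathcal{A}$ into $\mathcal{B}^{\op}$ and vice versa, so each application swaps $\mathcal{A}\leftrightarrow\mathcal{B}$ and inserts an $\op$. I expect the only real obstacle to be verifying that the chain of equivalences closes up correctly — that after the necessary applications of Proposition \ref{acbc}(i) and (ii) one lands precisely on $\mathcal{A}(\fd({}^C\Mm))\approx\mathcal{B}(\fd({}^C\Mm))$ and not on some variant with an extra opposite. Since $(-)^{\op}$ is an involution on categories and preserves/reflects equivalences, and since both $\mathcal{A}$ and $\mathcal{B}$ are functorial in $\mathcal{C}$ up to equivalence, this is a finite and purely formal check. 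No further structural input about $C$ is needed; the statement is an immediate formal consequence of the existence of the self-duality $(-)^*$ on finite dimensional comodules together with the two parts of Proposition \ref{acbc}.
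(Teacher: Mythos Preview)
Your proposal is correct and follows essentially the same route as the paper: use the duality $(-)^*$ between $\fd({}^C\Mm)$ and $\fd(\Mm^C)$ together with both parts of Proposition~\ref{acbc} to obtain $\mathcal{B}(\fd(\Mm^C))\approx\mathcal{A}(\fd({}^C\Mm))^{\op}$, and then read off the equivalence with the definition of symmetry. The paper's proof is just a terser version of the same chain of equivalences, with the second ``if and only if'' likewise dismissed as the symmetric argument.
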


\begin{proof} Using the duality between $\fd({}^C\Mm)$ and $\fd(\Mm^C)$ and Proposition \ref{acbc}, we have
$\mathcal{B}({\rm fd}(\Mm^C))\approx \mathcal{B}({\rm fd}({}^C\Mm)^{\rm op})\approx \mathcal{A}({\rm
fd}({}^C\Mm))^{\rm op}$. Now it follows that ${}^C\Mm$ and $\Mm^C$ are symmetric if and only if there is a duality
between $\mathcal{B}({\rm fd}(\Mm^C))$ and $\mathcal{B}({\rm fd}({}^C\Mm))$ if and only if $\mathcal{A}({\rm
fd}({}^C\Mm))\approx \mathcal{B}({\rm fd}({}^C\Mm))$. The last assertion follows in a similar way. 
\end{proof}

\section{Negative answers}

In this section we will study the simple objects of the category $\fp({}_R\Mm)$ of finitely presented left
$R$-modules over the functor ring $R$ of a locally finite category and use the conclusions to give
examples of situations where the Gruson-Jensen duality does not happen. 

\begin{proposition}\label{simple1} Let $\mathcal{C}$ be a finitely accessible category with products having functor ring
$R$. Then a finitely presented left $R$-module $X$ is simple in $\fp({}_R\Mm)$ if and only if it is simple in
${}_R\Mm$. 
\end{proposition}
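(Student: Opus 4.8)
The plan is to exploit Proposition~\ref{fginfp}, which tells us that since $\mathcal{C}$ has products, the category ${}_R\Mm$ is locally coherent, hence $\fp({}_R\Mm)$ is an abelian subcategory of ${}_R\Mm$ closed under kernels and cokernels (equivalently, $\fp(\mathcal{C})$ is abelian by Proposition~\ref{AbCat}, and $\fp({}_R\Mm)$ is a Serre subcategory of ${}_R\Mm$). The key point to extract from local coherence is that a finitely presented module has \emph{finitely presented} submodules in abundance: more precisely, in a locally coherent category every finitely generated submodule of a finitely presented object is finitely presented. The direction ``simple in ${}_R\Mm$ $\Rightarrow$ simple in $\fp({}_R\Mm)$'' is trivial, since any subobject in $\fp({}_R\Mm)$ is in particular a subobject in ${}_R\Mm$. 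So the whole content is the converse.

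For the converse, suppose $X$ is simple in $\fp({}_R\Mm)$ but not simple in ${}_R\Mm$; then $X$ has a nonzero proper submodule $Y\subsetneq X$ in ${}_R\Mm$. First I would reduce to the case of a cyclic, hence finitely generated, submodule: pick $0\neq y\in Y$ and replace $Y$ by $Ry$ (using that $R$ has enough idempotents, $y=e_i y$ for some idempotent, so $Ry$ is genuinely finitely generated as a unitary module). Now $Ry$ is a finitely generated submodule of the finitely presented module $X$; since ${}_R\Mm$ is locally coherent, $Ry$ is finitely presented. Thus $Ry$ is a nonzero subobject of $X$ inside $\fp({}_R\Mm)$, and it is proper because $y\in Y\subsetneq X$ forces $Ry\subseteq Y\neq X$. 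This contradicts simplicity of $X$ in $\fp({}_R\Mm)$.

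The main obstacle — really the only nontrivial input — is justifying that finitely generated submodules of finitely presented $R$-modules are finitely presented; this is exactly the coherence of $\fp({}_R\Mm)$, which is guaranteed by Proposition~\ref{fginfp} together with the hypothesis that $\mathcal{C}$ has products. One should be a little careful to remember that $R$ is a ring with enough idempotents and that ``module'' throughout means unitary module, so ``finitely generated'' and ``cyclic up to a finite sum of idempotent-cyclic pieces'' behave as expected, and the ambient category ${}_R\Mm$ is Grothendieck so that arbitrary submodules exist. Modulo these standard facts the argument is the short contradiction above; I would write it in exactly that order: dispose of the easy implication in one line, then for the converse pass to a cyclic submodule, invoke coherence to make it finitely presented, and conclude.
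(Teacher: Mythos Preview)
Your proof is correct and follows essentially the same route as the paper's: both dispose of the easy implication immediately, and for the converse pick a nonzero element $y$ with $Ry\neq X$, use the idempotent relation $e_iy=y$ to see $Ry$ is finitely generated (a quotient of $Re_i$), and then invoke local coherence of ${}_R\Mm$ (from Proposition~\ref{fginfp}) to conclude $Ry$ is finitely presented, yielding the contradiction. The only cosmetic difference is that you pass through an auxiliary proper submodule $Y$ before taking the cyclic $Ry\subseteq Y$, whereas the paper goes directly to $Rx$.
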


\begin{proof} By Proposition \ref{fginfp}, $\fp({}_R\Mm)$ is locally coherent. Then by Proposition \ref{AbCat},
$\fp({}_R\Mm)$ is abelian (with only finite coproducts). It is easy to deduce that a morphism between finitely presented
left $R$-modules is a monomorphism (epimorphism) in $\fp({}_R\Mm)$ if and only if it is a monomorphism (epimorphism) in
${}_R\Mm$, and kernels and cokernels are computed in ${}_R\Mm$. Then the subobjects of an object in $\fp({}_R\Mm)$
are the finitely presented submodules. Therefore, a simple finitely presented left $R$-module will be a simple
object in $\fp({}_R\Mm)$.

Conversely, assume that $X$ is simple in $\fp({}_R\Mm)$. Denote $U=\bigoplus_{i\in I}U_i$, where $(U_i)_{i\in I}$ is
a family of representatives of the isomorphism classes of finitely presented objects of $\mathcal{C}$. If $X$ is not a
simple left $R$-module, then there is some $x\in X$ such that $0\neq Rx\neq X$. But since $X$ is unitary, $e_ix=x$ for
some idempotent $e_i$ corresponding to a split inclusion $U_i\hookrightarrow U$, and so in fact $Rx$
is finitely generated and is a quotient of $Re_i$. Since $\fp({}_R\Mm)$ is locally coherent, $Rx$ has to be finitely
presented, and this contradicts the assumption that $X$ contains no non-trivial finitely presented submodules.
\end{proof}

A finitely accessible category will be called \emph{locally finite} if every finitely presented object has finite
length. In particular, it is locally coherent, and so Grothendieck. The following result is important as it computes 
the simple objects of the abelian category $\mathcal{B}(\fp(\mathcal{C}))$ for a locally finite category $\mathcal{C}$,
and it will be the key ingredient that we will use in order to construct examples of non-symmetric comodule categories. 

\begin{proposition}\label{simple2} Let $\mathcal{C}$ be a locally finite category with functor ring $R$. An object $X$
in $\fp({}_R\Mm)$ is simple if and only if there is an object $u:M\rightarrow N$ in $\mathcal{B}(\fp(\mathcal{C}))$,
which is the corresponding object of $X$ (up to equivalence) through the duality of Corollary \ref{fp-htpy},
and such that $M\rightarrow N$ is an epimorphism with simple kernel and $M$ is indecomposable injective. 
\end{proposition}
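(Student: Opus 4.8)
The plan is to translate the statement about simple objects of $\fp({}_R\Mm)$ into a statement about objects of $\mathcal{B}(\fp(\Cc))$ via the duality of Corollary \ref{fp-htpy}, and then to characterize which two-term complexes $[M,u,N]$ are simple in the abelian category $\mathcal{B}(\fp(\Cc))$. By Proposition \ref{simple1} (which applies since a locally finite category has products, being locally coherent and Grothendieck), being simple in $\fp({}_R\Mm)$ is the same as being simple in ${}_R\Mm$; but the cleaner route is to argue directly inside $\mathcal{B}(\fp(\Cc))$, using Proposition \ref{Dualities}(i) to realize this category as exact complexes $M\xrightarrow{u}N\to P\to 0$ up to homotopy, i.e. as right-exact complexes determined by $u:M\to N$ with $P=\Coker(u)$ attached. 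Under this identification, a short exact sequence in $\mathcal{B}(\fp(\Cc))$ is a short exact sequence of the associated three-term complexes, so subobjects and quotients of $[M,u,N]$ are controlled by the snake/horseshoe manipulations on such complexes.

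First I would fix $[M,u,N]$ and compute its subobjects in $\mathcal{B}(\fp(\Cc))$. The key point is that one may always replace $[M,u,N]$ by a homotopy-equivalent representative in which $u$ is injective, or in which $u$ is surjective, etc., and that every subobject of $[M,u,N]$ arises (up to homotopy) from a subcomplex. Concretely, a monomorphism into $[M,u,N]$ can be represented by some $[M',u',N']\to[M,u,N]$; using the explicit description of zero morphisms in $\mathcal{B}(\Cc)$ from Section 3 (the map $\alpha:N'\to M$ with $\alpha u'=f$) and the abelianness of $\fp(\Cc)$ from Proposition \ref{AbCat}, I would reduce to the case where $M'\subseteq M$, $N'\subseteq N$ are honest subobjects with $u(M')\subseteq N'$. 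Then I would show: $[M,u,N]$ is simple iff it has no nonzero proper such subobject up to homotopy. The two obvious sources of subobjects are (a) the subobject $[0,0,N'']$ for $0\ne N''\subseteq N$ with $N''\cap \im u=0$ — more precisely a complement-type piece of $N$ not hit by $u$ — and (b) the subobject $[M',u|_{M'},N]$ for a proper $0\ne M'\subsetneq M$; killing all of type (a) forces $u$ to be an epimorphism, and killing all of type (b) forces $M$ to be indecomposable and, combined with right-exactness and injectivity considerations, forces $M$ to have no proper nonzero subobject mapping compatibly — which is where simplicity of $\Ker u$ and injectivity of $M$ enter.

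The heart of the argument, and the step I expect to be the main obstacle, is pinning down exactly why $M$ must be \emph{indecomposable injective} rather than merely indecomposable. The injectivity must come from a homotopy-invariance argument: if $M$ is not injective, embed $M\hookrightarrow E$ into its injective hull $E$ in $\Cc$ (note $E$ is finitely presented since $\Cc$ is locally finite, as $E$ is the injective hull of a finite-length object and is again finite length in the locally finite setting — here I would need that injective hulls of finite length objects are finite length, which holds for comodule categories over a coalgebra and more generally should be invoked or assumed), and produce a nontrivial subobject or quotient of $[M,u,N]$ witnessing non-simplicity: the point is that with $M$ injective the complex $[M,u,N]$ cannot be "split off" a bigger complex, so any proper epimorphism onto it in $\mathcal{B}(\fp(\Cc))$ is forced to come from enlarging $N$, which is controlled by $\Coker u$. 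Concretely I would compute the subobjects of $[M,u,N]$ when $u$ is epi with kernel $K=\Ker u$: a subobject corresponds to $[M',u',N]$ with $M'\subseteq M$, $u(M')=N$, i.e. $M'+K=M$; if moreover $M$ is indecomposable injective then $M$ is the injective hull of $K$, so no proper such $M'$ exists unless $K$ is decomposable or $M'$ can be chosen smaller, and minimality of the injective hull together with $K$ simple gives that there is no proper nonzero subobject, hence simplicity. For the converse direction I would run the same bookkeeping backwards: given $X$ simple in $\fp({}_R\Mm)$, pass to its complex $[M,u,N]$, reduce (up to homotopy) to $u$ epi, show $M$ must be indecomposable (else $X$ decomposes), show $\Ker u$ must be simple (else an intermediate subobject of $[M,u,N]$ gives a proper nonzero subobject of $X$), and finally show $M$ must be injective (else replacing $M$ by its injective hull and adjusting $N$ produces a proper quotient of $[M,u,N]$, contradicting simplicity). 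Throughout I would lean on Corollary \ref{fp-htpy}, Proposition \ref{Dualities}, and the explicit homotopy description from Section 3; the only genuinely delicate bookkeeping is the homotopy-invariant manipulation of representatives, which I would handle once as a lemma ("every subobject of $[M,u,N]$ in $\mathcal{B}(\fp(\Cc))$ is represented by a subcomplex") and then apply twice.
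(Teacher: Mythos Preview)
Your overall strategy of working inside $\mathcal{B}(\fp(\Cc))$ and classifying subobjects is reasonable, but there is a genuine gap at the step where you prove that $M$ must be injective. You propose to embed $M$ into its injective hull $E$ in $\Cc$ and to use the resulting map to produce a proper sub- or quotient object of $[M,u,N]$; for this you need $E\in\fp(\Cc)$, and you explicitly assume that ``injective hulls of finite-length objects are finite length, which holds for comodule categories over a coalgebra.'' That assumption is \emph{false} in precisely the categories of interest here: for instance, for the divided-power coalgebra (or the coalgebra $C_\NN$ of the paper) the injective hull of a simple comodule is infinite dimensional. Indeed, $E(S)$ is finite dimensional for every simple $S$ exactly when the coalgebra is semiperfect on the relevant side, and the whole purpose of this proposition is to treat the non-semiperfect case. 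So your argument for injectivity cannot be carried out inside $\fp(\Cc)$.

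The paper avoids this obstacle by a completely different device. It first translates simplicity of $X$ into the elementwise condition: for every $\alpha:M\to P$ with $P\in\fp(\Cc)$, either $\alpha$ factors through $u$, or there exist $\beta,\gamma$ with $1_M=\beta\alpha+\gamma u$. Taking $\alpha$ to be an \emph{arbitrary} monomorphism $M\hookrightarrow P$ (not an injective hull), this relation says that $(\alpha,u):M\to P\oplus N$ is a split monomorphism. A Krull--Remak--Schmidt comparison of $M\simeq h(M)$ against the indecomposable summands of $P\oplus N$, together with the length inequality ${\rm length}(M)>{\rm length}(N_k)$ coming from the fact that $u$ is epi with nonzero kernel, forces $M$ to be isomorphic to a summand of $P$. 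Since $P$ was arbitrary, $M$ is injective in $\fp(\Cc)$ and hence in $\Cc$. This is the missing idea in your plan.

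A smaller point: your ``type (a)'' subobjects $[0,0,N'']$ are always zero in $\mathcal{B}(\fp(\Cc))$, since $0\to N''$ is a split monomorphism; so they cannot be used to force $u$ epi. In the paper this reduction is done instead by analysing the case $\im(u)\neq N$ directly via the same factoring criterion and showing it either reduces to the epi case or yields $[M,u,N]=0$.
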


\begin{proof} Denote $U=\bigoplus_{i\in I}U_i$, where $(U_i)_{i\in I}$ is a family of representatives of the isomorphism
classes of finitely presented objects of $\mathcal{C}$. Consider a finite presentation
\[\diagram \Hom(N,U)\rto^{\Hom(u,U)} & \Hom(M,U)\rto^{\qquad \varphi}\rto & X\rto & 0\enddiagram\] of the simple object
$X$ for some objects $M, N$ of $\fp(\mathcal{C})$. Then $u:M\to N$ is an object in $\mathcal{B}(\fp(\mathcal{C}))$. 

Note that we may assume that $M$ is indecomposable. Indeed, write $M=\bigoplus\limits_{i\in F}M_i$ as a finite direct
sum of indecomposables. Then since $\varphi\neq 0$ at least one of its restrictions to $\Hom(M_i,U)$ is nonzero, and
since $X$ is simple, this restriction will be surjective. Moreover, the kernel of this restricted morphism (as a
morphism of left $R$-modules) will be finitely presented, so in particular finitely generated.

Let $\sigma_M$ be the (split) inclusion of $M$ into $U$. The fact that $X$ is simple translates equivalently to the
following: for any $\alpha\in \Hom(M,U)$, either $\alpha$ factors through $u:M\to N$ (so $\alpha=\overline{\alpha}u$ for
some $\overline{\alpha}\in R$) or there are $\beta\in R$ and $\gamma\in\Hom(N,U)$ such that
$\sigma_M=\beta\alpha+\gamma u$ (i.e. the generator $\sigma_M$ of $\Hom(M,U)$ is generated by $\alpha$ modulo
$\Hom(N,U)$, for any $\alpha\neq 0$ modulo
$\Hom(N,U)$). 
$$\xymatrix{
M\ar[d]_\alpha\ar@{^{(}-}[dr]_{\sigma_M} | {\bigoplus}\ar[r]^u & N\ar[d]^\gamma & & 
M\ar[d]_\alpha\ar@{=}[dr] | *+[o][F-]{1_M}\ar[r]^u\ar@{.}@/^1.2pc/[dr] | {+} \ar@{.}@/_1.2pc/[dr] | {+} & N\ar[d]^\gamma
\\
U\ar[r]_(0.3)\beta & U=M\oplus U'_M & & P\ar[r]_(0.5)\beta & M 
}$$
Now, since $\alpha$ has finite image in $U$, and $M$ is a direct summand with complement $U'_M$, and 
$\beta$ and $\gamma$ also have finite images, this condition is equivalent to the one given by the right diagram above,
where $P$ can be any object of finite length of $\mathcal{C}$. That is, whenever $\alpha$
does not factor through $u$ there are $\beta$ and $\gamma$ such that $1_M=\beta\alpha+\gamma u$. We have two cases:

{\bf $\bullet$ (1)} $\im(u)=N$. Let $K=\Ker(u)$. We note that a quotient $\alpha:M\rightarrow M/L$ splits through $u$ 
if and only if $K\subseteq L$. When this is not true, equivalently, when $L\cap K\neq K$, we see that $L\cap K\subseteq
\Ker(\beta\alpha)\cap\Ker(\gamma u)$, and since $\beta\alpha+\gamma u=1_M$, we get $L\cap K=0$. This shows that $K$ is a
simple subobject of $M$ (pick $L$ any subobject of $K$ for this purpose); if $K=0$, then we would have $X=0$, a
contradiction. Now choose an arbitrary inclusion morphism $\alpha:M\rightarrow P$ and find such $\beta,\gamma$ as above
(because $\alpha$ does not factor through $u$). The equality $1_M=\beta\alpha+\gamma u$ shows that the morphism
$h=(\alpha+u):M\rightarrow P\oplus N$ is split by $(\beta+\gamma)$. So we have $h(M)\oplus M'=N\oplus
P=(\bigoplus\limits_{k\in K}N_k)\oplus(\bigoplus\limits_{j\in J}P_j)$, with each $N_k$ and $P_j$ indecomposable. Now,
since all these objects are of finite length, by (an equivalent form of the) Krull-Remak-Schmidt-Azumaya theorem, we
find that $h(M)$ is a complement of a direct sum of some subfamily of $\{N_k,P_j\}_{k\in K, j\in J}$ (see e.g.
\cite[Lemma~9.2.2]{N} or \cite{AF}). Since $h(M)$ is indecomposable, and it
cannot be isomorphic to any of the $N_k$'s because ${\rm length}(M)\geq{\rm length}(N_k)$ ($u$ is an epimorphism with
nonzero kernel) we get $h(M)\oplus N\oplus (\bigoplus\limits_{j\neq j_0}P_j)=N\oplus P$, for $j_0\in J$ for which
$M\simeq h(M)\simeq P_{j_0}$. In particular, $h(M)\cap (N\oplus (\bigoplus\limits_{j\neq j_0}P_j))=0$, and therefore, if
$p$ is the projection onto $P_{j_0}$ we obtain $h(M)\cap \Ker(p)=0$, so $ph$ is a monomorphism. Thus, as $M\simeq
P_{j_0}$, $ph$ is an isomorphism. But note that $ph=p(\alpha+u)=p\alpha+pu=p\alpha$, since $\im(u)\subseteq N\subseteq
\Ker(p)$. This shows that $M$ splits off in $P$, and therefore it is injective in the category $\fp(\mathcal{C})$.
Using, similar arguments as those in \cite[Chapter~2, Section~4]{DNR}, it follows that $M$ is injective in
$\mathcal{C}$.

{\bf $\bullet$ (2)} $\im(u)\neq N$. We first note that $u$ must be a monomorphism. Consider
$\alpha:M\longrightarrow P=\im(\alpha)$ the corestriction of $u$. If this splits through $u$, then it is easy to see
that $\im(u)$ is a direct summand in $N$. This situation reduces to the previous one, since if $u:M\rightarrow
\im(u)\oplus T$, then we can have an exact sequence $\Hom(\im(u),U)\rightarrow \Hom(M,U)\rightarrow X\rightarrow 0$
(i.e. then $M\rightarrow N$ and $M\rightarrow \im(u)$ represent the same object of $\Mor(\fp(\mathcal{C}))$). As before,
finding $\beta$ and $\gamma$ with $\beta\alpha+\gamma u=1_M$ will yield that $\Ker(u)=0$, since $\Ker(u)=\Ker(\alpha)$.
But now we have a context similar to that of the proof of (1), with the roles of $\alpha$ and $u$ reversed: $u$ is a
monomorphism and $\alpha$ is an epimorphism. As before, we get that $u$ splits; but this situation is not
possible, since in this case, $u$ defines the zero object of $\mathcal{B}(\fp(\mathcal{C}))
\,\simeq (\fp({}_R\Mm))^{\rm op}$ (see Proposition \ref{fp-htpy}), i.e. $X=0$. 

Finally, let $u:M\rightarrow N$ an epimorphism with simple kernel $S$ and $M$ finite dimensional indecomposable
injective. We check that the finitely presented object $X$ corresponding to $u:M\rightarrow N$ is simple, and for
this we check the equivalent condition given by the above right diagram. Let $\alpha:M\rightarrow P$ be a morphism. If
$\Ker(\alpha)\supseteq S$, then obviously $\alpha$ factors through $u$. Otherwise, $\Ker(\alpha)\cap S=0$, and since $M$
is indecomposable injective and $S$ is simple, $S$ is the socle of $M$ and is essential in $M$. Thus we have
$\Ker(\alpha)=0$ (since $\Ker(\alpha)\cap S=0$), so $\alpha$ is injective. But then, since $M$ is an injective object,
$\alpha$ splits off and we can find $\beta$ with $\beta\alpha=u$, and so we can take $\gamma=0$.
\end{proof}

Next we recall the following definition and results from \cite{CGT,I,LS}.

\begin{definition} \rm A right $C$-comodule $M$ is called \emph{chain} (or \emph{uniserial}) if the lattice of its right
subcomodules is a chain. A coalgebra $C$ is called \emph{right serial} if its indecomposable injective right comodules
are uniserial, equivalently, $C$ is a direct sum of uniserial right comodules. $C$ is called \emph{serial} if it is both
left and right serial.
\end{definition}

It is shown in \cite{CGT} that a coalgebra is serial (see also \cite[Corollary 25.3.4]{F} and the proof of
\cite[Proposition 4.4]{I}) if and only if any finite dimensional right (equivalently, any finite dimensional left)
$C$-comodule is a direct sum of chain comodules. 

Recall that when $X$ is a locally finite partially ordered set and $F$ is a field (so for each $x\leq y$ there are only
finitely many $z$ such that $x\leq z\leq y$), $C_X=F\{(x,y)\in X\times X | x\leq y \}$, the vector space 
with basis $\{(x,y)\in X\times X | x\leq y \}$ becomes an $F$-coalgebra when
endowed with the comultiplication $\Delta((x,y))=\sum\limits_{x\leq z\leq y}(x,z)\otimes (z,y)$ and counit
$\varepsilon((x,y))=\delta_{(x,y)}$. Let $C_\NN$ be the coalgebra associated in this way to the set of natural numbers.
Then we have:

\begin{proposition} $C_\NN=\bigoplus\limits_{n\in \NN}F\{(n,p)|p\geq n\}$ is a decomposition of $C_\NN$ into
indecomposable injective right comodules, and $C_\NN=\bigoplus\limits_{n\in\NN}F\{(k,n)|k\leq n\}$ is a decomposition of
$C_\NN$ as a direct sum of indecomposable injective left comodules. Moreover, all these are chain comodules.
Consequently, $C_\NN$ is a serial coalgebra. It is right semiperfect, and not left semiperfect.
\end{proposition}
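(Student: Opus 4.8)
The plan is to exploit the incidence‑coalgebra structure directly. Write $C=C_\NN$, with coaction $\rho=\lambda=\Delta$ on itself and $\Delta((i,j))=\sum_{i\le z\le j}(i,z)\otimes(z,j)$. (One may note in passing that $C_\NN$ is the path coalgebra of the quiver $0\to 1\to 2\to\cdots$, which makes the statements below conceptually clear, but I will argue by hand.) First I would verify the two stated decompositions. For $C_n=F\{(n,p)\mid p\ge n\}$ the formula for $\Delta$ gives $\rho((n,p))=\sum_{n\le z\le p}(n,z)\otimes(z,p)\in C_n\otimes C$, so $C_n$ is a right subcomodule; as each basis vector $(a,b)$ lies in exactly one $C_a$, we get $C=\bigoplus_n C_n$ in $\Mm^C$. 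Symmetrically, $D_n=F\{(k,n)\mid k\le n\}$ satisfies $\lambda((k,n))=\sum_{k\le z\le n}(k,z)\otimes(z,n)\in C\otimes D_n$, so $D_n$ is a left subcomodule and $C=\bigoplus_n D_n$ in ${}^C\Mm$.

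Next, the main point: each $C_n$ is a chain right comodule. I claim its proper nonzero subcomodules are exactly the finite‑dimensional initial segments $W_m=F\{(n,p)\mid n\le p\le m\}$, $m\ge n$, each of which is a subcomodule by the same computation. Conversely, if $0\ne W\subseteq C_n$ is a subcomodule and $0\ne w=\sum_p a_p(n,p)\in W$ has largest index $m$, then applying $\mathrm{id}\otimes c^*$ to $\rho(w)\in W\otimes C$ with $c^*$ the basis functional dual to $(m,m)$ yields $w\cdot c^*=a_m(n,m)\in W$, because $(n,p)\cdot c^*$ equals $(n,m)$ when $p=m$ and $0$ otherwise; hence $(n,m)\in W$, and applying $\mathrm{id}\otimes(z,m)^*$ to $\rho((n,m))$ gives $(n,z)\in W$ for all $n\le z\le m$, so $W_m\subseteq W$. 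Thus $W$ is the increasing union of those $W_m$ it contains, forcing $W=W_m$ for some $m$ or $W=C_n$; in particular the subcomodule lattice of $C_n$ is a chain. The same argument, using $c^*\otimes\mathrm{id}$ to extract the smallest index, shows each $D_n$ is a chain left comodule with subcomodules $F\{(k,n)\mid j\le k\le n\}$, $0\le j\le n$ — and here there is no ``unbounded'' case since $\dim_F D_n=n+1$. A chain comodule is indecomposable (two nonzero direct summands would be comparable, hence one contained in the other), and each $C_n$ (resp.\ $D_n$) is injective, being a direct summand of $C$, which is the injective cogenerator of $\Mm^C$ (resp.\ ${}^C\Mm$); see \cite{DNR}. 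This establishes that both decompositions are into indecomposable injective chain comodules, and hence that $C_\NN$ is both left and right serial, i.e.\ serial.

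Finally, the semiperfectness assertions come from dimension counts: every indecomposable injective left comodule is some $D_n$ with $\dim_F D_n=n+1<\infty$, so $C_\NN$ is right semiperfect, whereas the indecomposable injective right comodule $C_0=F\{(0,p)\mid p\ge 0\}$ is infinite‑dimensional, so $C_\NN$ is not left semiperfect; here I invoke the standard characterization (Lin's theorem, \cite{DNR}) of one‑sided semiperfectness of a coalgebra in terms of finite‑dimensionality of its indecomposable injective comodules on the opposite side. The only genuinely delicate step is the classification of the subcomodules of the infinite‑dimensional $C_n$: one must rule out any ``unbounded'' proper subcomodule, which is precisely what the union‑of‑a‑chain argument accomplishes once the rational $C^*$‑action has been used to show that every subcomodule contains the finite‑dimensional initial segment generated by each of its elements.
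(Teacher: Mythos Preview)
Your proof is correct and complete, but it takes a different route to the chain property than the paper does. The paper argues structurally: it identifies the coradical as $F\{(n,n)\mid n\in\NN\}$, so each $E_r(n)$ (your $C_n$) has simple socle $F\{(n,n)\}$ and is therefore indecomposable injective; it then observes the quotient isomorphisms $E_r(n)/F\{(n,n)\}\simeq E_r(n+1)$ and $E_l(n)/F\{(n,n)\}\simeq E_l(n-1)$, so that the Loewy filtration has simple quotients at every step, and invokes \cite{I} to conclude the comodules are uniserial. You instead classify all subcomodules of $C_n$ directly, using the rational $C^*$-action via dual basis functionals to show that any nonzero subcomodule is one of the initial segments $W_m$ (or all of $C_n$), and then deduce indecomposability from the chain property rather than from the socle. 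Your argument is more elementary and self-contained (no appeal to Loewy series or to \cite{I}); the paper's argument is shorter but relies on outside machinery. The treatment of injectivity (direct summand of $C$) and of semiperfectness (dimension count on indecomposable injectives on the opposite side) is essentially the same in both.
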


\begin{proof} It is easy to see that $E_r(n)=F\{(n,p)|p\geq n\}$ are right subcomodules and $E_l(n)=F\{(k,n)|k\leq n\}$
are left subcomodules. Also note that the coradical of this coalgebra is cocommutative, and consists of the span of all
grouplike elements $F\{(n,n)|n\in\NN\}$. Therefore, the socle (the simple part) of each $E_r(n)$ (and $E_l(n)$) is
precisely $F\{(n,n)\}$, so it is simple. This shows that they are indecomposable injective (since they are summands of
$C_\NN$). We also see that $E_r(n)/F\{(n,n)\}\simeq E_r(n+1)$ and $E_l(n)/F\{(n,n)\}\simeq E_l(n-1)$. Then, using an
inductive process, we show that the Loewy filtration of each $E_r(n)$ has simple quotient at each step, so
$E_r(n)$ are chain comodules by \cite{I}. Similarly, $E_l(n)$ are chain comodules. \\
This coalgebra is right semiperfect since the injective
indecomposable left comodules $E_l(n)$ are finite dimensional, and not left semiperfect since there are 
infinite dimensional injective indecomposable right comodules $E_r(n)$ (in fact, all these are infinite dimensional).
\end{proof}

\begin{example} \rm The coalgebra $C_\NN$ does not have Gruson-Jensen duality. This follows immediately by applying
Theorem \ref{t:semiperf}, since this coalgebra is right but not left semiperfect. It also follows by applying the
results of this section, which provide a little more information about the categories of modules over the functor rings.
Indeed, note that there is a simple
injective left $C_\NN$-comodule, namely $F\{(0,0)\}$. By Proposition \ref{simple2}, there will be some simple modules in
the category of finitely presented left modules over the functor ring $R$ of ${}^C\Mm$. However, again by Proposition
\ref{simple2}, there will be no simple modules in the category of finitely presented left modules over the functor ring
$L\simeq R^{\rm op}$ of $\Mm^C$, because there is no finite dimensional injective right $C_\NN$-comodule, so there can
be no epimorphism $Q\rightarrow Q/T$ with $Q$ injective and $T$ a simple right subcomodule of $Q$ (which would be the
object from $\mathcal{B}(\fp(\mathcal{C}))$ corresponding to a simple finitely presented left $L$-module). This
illustrates 
the result of Theorem \ref{t:semiperf} by example.
\end{example}

In what follows, we show that other possible replacements of the semiperfect hypothesis of Theorem \ref{t:semiperf}
to very strong finitary properties are still not enough for this duality to hold. 

We recall the following general type of construction, dual to the generalized upper triangular matrix ring (see also
\cite[Section 4]{IL}). Let $C$ and $D$ be two coalgebras and $M$ be a $C$-$D$ bicomodule. Write $c\rightarrow c_1\otimes
c_2$, $d\rightarrow
d_1\otimes d_2$ for the comultiplications of $C$ and $D$ respectively, and $m\mapsto m_{-1}\otimes m_0$ and
$m\rightarrow m_0\otimes m_1$ the left and right coactions of $M$ (because of the bicomodule condition, there is no
danger of confusion even if both coactions are present). Then on the vector space $H=C\oplus M\oplus
D=\left(\begin{array}{cc} C & M \\ 0 & D \end{array} \right)$ we can introduce the coalgebra structure given by:
\begin{eqnarray*}
\left(\begin{array}{cc} c & m \\ 0 & d \end{array} \right) & \longmapsto & 
\left(\begin{array}{cc} c_1 & 0 \\ 0 & 0 \end{array} \right)\otimes \left(\begin{array}{cc} c_2 & 0 \\ 0 & 0 \end{array}
\right) 
+ \left(\begin{array}{cc} m_{-1} & 0 \\ 0 & 0 \end{array} \right) \otimes \left(\begin{array}{cc} 0 & m_0 \\ 0 & 0
\end{array} \right) + \\
& & + \left(\begin{array}{cc} 0 & m_0 \\ 0 & 0 \end{array} \right) \otimes \left(\begin{array}{cc} 0 & 0 \\ 0 & m_1
\end{array} \right) 
+ \left(\begin{array}{cc} 0 & 0 \\ 0 & d_1 \end{array} \right) \otimes \left(\begin{array}{cc} 0 & 0 \\ 0 & d_2
\end{array} \right) \\
\left(\begin{array}{cc} c & m \\ 0 & d \end{array} \right) & \longmapsto & \varepsilon_C(c)\varepsilon_D(d)
\end{eqnarray*}
Using these, it is easy to note that we have:
\begin{eqnarray*}
\left(\begin{array}{cc} C & M \\ 0 & D \end{array} \right) & = & \left(\begin{array}{cc} C & 0 \\ 0 & 0 \end{array} 
\right) \oplus \left(\begin{array}{cc} 0 & M \\ 0 & D \end{array} \right) \,\,\,\,\, {\rm as\,left\,comodules,} \\
\left(\begin{array}{cc} C & M \\ 0 & D \end{array} \right) & = & \left(\begin{array}{cc} C & M \\ 0 & 0 \end{array} 
\right) \oplus \left(\begin{array}{cc} 0 & 0 \\ 0 & D \end{array} \right) \,\,\,\,\, {\rm as\,right\,comodules.}
\end{eqnarray*} 
Also, one can see that $H^*=\left(\begin{array}{cc} C^* & M^* \\ 0 & D^* \end{array} \right)$, 
the usual upper triangular matrix ring with $M^*$ a $C^*$-$D^*$ bimodule. 

\begin{example}\label{ex.6} \rm 
Let $C$ be the divided power coalgebra over a field $F$, which is the finite dual of the algebra of formal power series
$F[[X]]$. It has a basis $(c_n)_{n\geq 0}$, and comultiplication $\Delta(c_n)=\sum\limits_{i+j=n}c_i\otimes c_j$ and
counit $\varepsilon(c_n)=\delta_{0n}$. Let $D=F$ as a coalgebra. Then $\varepsilon:C\rightarrow D$ is a morphism of
coalgebras, and so $M=C$ becomes a $C$-$D$ bicomodule (in fact the right $D$-comodule structure is nothing else
but the vector space structure of $C$).  Let $H=\left(\begin{array}{cc} C & M \\ 0 & D\end{array}
\right)=\left(\begin{array}{cc} C & C \\ 0 & F\end{array} \right)$ be the coalgebra defined above. More specifically,
this coalgebra has a basis $\{(c_n)_n; (x_n)_n; t\}$ with comultiplication $\Delta(c_n)=\sum\limits_{i+j=n}c_i\otimes
c_j$, $\Delta(x_n)=\sum\limits_{i+j=n}c_i\otimes x_j + x_n\otimes t$, $\Delta(t)=t\otimes t$ and counit given by
$\varepsilon(c_n)=\delta_{0n}$, $\varepsilon(t)=1$, $\varepsilon(x_n)=0$. Then:

{\bf $\bullet$} The decomposition of $H$ as indecomposable injective left comodules is $H=F\{(c_n)_n\}\oplus
F\{(x_n)_n;t\}$.

{\bf $\bullet$} The decomposition of $H$ as indecomposable injective right comodules is $H=F\{(c_n)_n;
(x_n)_n\}\oplus Ft$.

Denote by $R$ and $L$ the functor rings of ${}^H\Mm$ and $\Mm^H$ respectively. As before, by Proposition \ref{simple2},
this shows that there are no simple modules in $\fp({}_R\Mm)$, because there are no finite dimensional injective
comodules in ${}^H\Mm$, but there are some simple modules in $\fp({}_L\Mm)$
(corresponding to the simple injective right $H$-comodule $Ft$), so the two categories are not dual to each other.  \\
Note that this coalgebra has some other very nice ``finitary'' properties, and still the Gruson-Jensen duality does not
hold. We can see that the second term of the coradical filtration of $H$ is $H_1=F\{c_0,c_1,x_0,x_1,t\}$. Then,
since $\dim(H_1)<\infty$, by \cite[Theorem 2.8]{CNV} we get that $H$ is left and right almost noetherian, and
therefore, it is also left and right $\Ff$-noetherian. Moreover, $H$ is left artinian (i.e. artinian as a left
$H$-comodule) and $H^*$ is left noetherian (but
not right noetherian). This can be seen by looking at the dual ring $H^*=\left(\begin{array}{cc}C^* & C^* \\ 0 &
F\end{array} \right)$ of $H$. Then, by well known facts about matrix rings of this type, $H^*$ is left noetherian
because $C^*=F[[X]]$ and $F$ are noetherian and $C^*$ is left finitely generated, but it is not right noetherian because
$C^*$ is not finitely generated over $F$. The fact that $H$ is left artinian can be easily seen either because $H^*$ is
left noetherian, or because each indecomposable injective left $H$-comodule has only finite dimensional subcomodules.  
\end{example}

\begin{remark}
We note that a left and right semiperfect coalgebra is left and right $\Ff$-noetherian, 
and the left (and right) $\Ff$-noetherian is a categorical condition for a coalgebra: it means that every finite
dimensional  rational $C^*$-module is finitely presented as left $C^*$-module, that is, the categories of ${\rm
fp}(\Mm^C)$ and $f.d.{\rm fp}{}_{C^*}\Mm$ (finitely presented $C^*$-modules which are finite dimensional) coincide.
Because of this, one could then expect that the result on the G-J duality for $C^*$ might offer some insight to such a
duality between the functor rings of ${\rm fp}(\Mm^C)$ and ${\rm fp}({}^C\Mm)$, but the above example shows that this
does not hold. In fact, it is seen that even stronger conditions, such as $C^*$ almost noetherian on both sides and even
noetherian on one side are not enough to have the G-J duality. The above example is also motivated by the following
fact: a coalgebra $C$ which is right semiperfect and left artinian (i.e. $C$ is artinian as a left $C$-comodule,
equivalently, $C^*$ is left noetherian) is necessarily finite dimensional. Indeed, if $C$ is left artinian, then
$C=\bigoplus\limits_{i=1}^nE(S_i)$ a finite sum of artinian indecomposable injectives in ${}^C\Mm$; but since $C$ is
right semiperfect, these $E(S_i)$ are finite dimensional, and so $C$ is finite dimensional. Thus, the conditions of $C$
being left artinian and $C$ being right semiperfect can be thought as two ramifications of the finite dimensional
coalgebras whose ``intersection'' is the class of finite dimensional coalgebras. 
\end{remark}

In view of the above and of Example \ref{ex.6}, it is then natural to ask the following question:

\strut {\bf Question} {\it If $C$ is a left and right artinian coalgebra, are the categories ${}^C\Mm$ and $\Mm^C$
symmetric?} \strut 

This would in fact provide a first example of a Gruson-Jensen duality (symmetry) where the categories of comodules in
question are of quite a different nature than the categories of unitary modules over a ring with enough idempotents.
Examples of Gruson-Jensen duality for categories other than categories of unitary modules over a ring with enough
idempotents are scarce, apparently the only other known generic example of this type being the case of the categories of
unitary and torsionfree modules over an idempotent ring, provided they are locally finitely generated (see
\cite{CG}). Let us point out that the main obstacle in establishing a Gruson-Jensen duality for such categories is the
lack of enough projective objects.

\begin{center}
\sc Acknowledgment
\end{center}
The authors wish to thank the referee for careful comments, and for suggesting the second part of Theorem
\ref{t:semiperf}. We also wish to thank Stefaan Caenepeel for useful discussions on the subject and the hospitality of
Vrije Universiteit Brussels in the summer of 2009. The second author also wishes to thank the first for his hospitality
during his 2010 visit in the Department of Mathematics of ``Babe\c s-Bolyai'' University of Cluj-Napoca. 

The first author acknowledges the support of the Romanian grant PN-II-ID-PCE-2008-2 project ID\_2271. For the second
author, this work was supported by the strategic grant POSDRU/89/1.5/S/58852, Project ``Postdoctoral programe for
training scientific researchers'' cofinanced by the European Social Fund within the Sectorial Operational Program Human
Resources Development 2007-2013.

\end{document}